\documentclass[11pt]{article}
\usepackage{amssymb,epsfig}
\usepackage{amsmath,amsthm}
\usepackage{amscd}
\usepackage{graphicx}
\usepackage{epstopdf}
\usepackage{color}
\usepackage{multirow,array}

\hoffset=-2.5cm \voffset=-2cm
\textwidth=17cm \textheight=22cm

\title {\bf Geometric representation of the weighted harmonic mean of $n$ positive values and potential uses.}

\author{S. Amat\thanks{Departamento de Matem\'atica Aplicada y Estad\'{\i}stica.
 Universidad Polit\'ecnica de Cartagena (Spain).
 e-mail:{\tt sergio.amat@upct.es.} The first four authors have been supported through the Proyecto financiado
por la Comunidad Aut\'onoma de la Regi\'on de Murcia a trav\'es de la convocatoria de Ayudas a proyectos
para el desarrollo de investigaci\'on cient\'{\i}fica y t\'ecnica por grupos competitivos, incluida en el Programa
Regional de Fomento de la Investigaci\'on Cient\'{\i}fica y T\'ecnica (Plan de Actuaci\'on 2018) de la Fundaci\'on
S\'eneca-Agencia de Ciencia y Tecnolog\'{\i}a de la Regi\'on de Murcia 20928/PI/18 and by the Spanish national research project PID2019-108336GB-I00.} \and P. Ortiz\thanks{
 Departamento de Matem\'atica Aplicada y Estad\'{\i}stica.
 Universidad Polit\'ecnica de Cartagena (Spain).
 e-mail:{\tt portiz@navantia.es.}} \and
 J. Ruiz\thanks{
 Departamento de Matem\'atica Aplicada y Estad\'{\i}stica.
 Universidad Polit\'ecnica de Cartagena (Spain).
e-mail:{\tt juan.ruiz@upct.es.}}
 \and
 J.C.Trillo\thanks{
 Departamento de Matem\'atica Aplicada y Estad\'{\i}stica.
 Universidad Polit\'ecnica de Cartagena (Spain).
e-mail:{\tt jc.trillo@upct.es.}} \and
D. F. Ya\~nez \thanks{
 Departamento de Matem\'aticas.
 Universidad de Valencia (Spain).
e-mail:{\tt dionisio.yanez@uv.es.}}
}

\begin{document}

\maketitle

\newtheorem{proposition}{Proposition}
\newtheorem{lemma}{Lemma}
\newtheorem{definition}{Definition}
\newtheorem{theorem}{Theorem}
\newtheorem{corollary}{Corollary}
\newtheorem{remark}{Remark}

\begin{abstract}
This paper is dedicated to the analysis and detailed study of a procedure to generate both the weighted arithmetic and harmonic means of $n$ positive real numbers.
Together with this interpretation, we prove some relevant properties that will allow us to define numerical approximation methods in several dimensions
adapted to discontinuities.
\end{abstract}

{\bf Key Words.} Arithmetic mean, harmonic mean, weighted arithmetic mean, weighted harmonic mean, reconstruction operators, adaptation, singularities.

\vspace{10pt} {\bf AMS(MOS) subject classifications.} 41A05, 41A10, 65D17.

\section{Introduction}\label{sec1}

Both the arithmetic and the harmonic means of positive numbers appear in different scientific scenarios varying from subdivision schemes and image processing to signal filtering, solution of partial differential equations, statistics, etcetera. The harmonic mean penalizes large
values in the given data, being appropriate, because of this characteristic, for several real world applications. Besides, when the input variables are similar, both means keep that similarity, which
is also convenient in applications as it will be seen later in the paper.

Among the applications of these two means we can mention for instance the following: in the field of numerical solution of hyperbolic conservation laws \cite{Susa,Mar}, for applications involving signal processing
\cite{ADLT,ADLT02,AL04,Trillo_thesis}, in more specific research areas such as image compression and image denoising \cite{ALRT,Den1}, in the fast generation of curves and surfaces by means of subdivision schemes
\cite{ADT,ACRT,KD}.

In \cite{OT3} a nonlinear reconstruction operator called PPH (Piecewise Polynomial Harmonic) was extended to nonuniform grids by using a specific weighted harmonic mean instead of the standard harmonic mean. In this paper our aim is to introduce some necessary ingredients to extend in turn this last reconstruction operator to several dimensions. More specifically speaking, we need to dispose of an appropriate mean in several dimensions which satisfies the required basic properties, the two mentioned above, as the harmonic mean does. We carry out this study accompanied by a geometric representation of the weighted harmonic mean of several values, which helps to quickly and intuitively understand the theoretical results.

Nonlinear means appear as good candidates to define adapted reconstruction methods which mi\-nimize the undesirable effects generated by the presence of a discontinuity in the data. In fact, we will give a list of already existing methods sharing these properties, and in turn we will summarize how to define families of new methods based on the theory and ideas developed through this paper.

The paper is organized as follows: In Section \ref{sec2} we work with the weighted arithmetic and harmonic means of two positive numbers, proving two essential results about these means which will allow us to define
 adapted reconstruction operators in the numerical experiments section. These properties come accompanied with an intuitive graphical interpretation in $2D$ according to a corresponding theoretical result that will be
 also proven. In Section \ref{sec3} a similar path will be followed for the $3D$ case, which involves working with weighted and harmonic means of three positive numbers.  Section \ref{sec4} deals with the general case of considering the weighted arithmetic and harmonic mean of $n$ positive numbers for whatever integer value $n\geq 2.$  In Section \ref{sec5} we outline some applications of these results in order to define adapted reconstructions in several dimensions. Finally, in Section \ref{sec6} we give some conclusions.

\section{About specific results on the weighted harmonic mean of two positive values} \label{sec2}
In this section we present an intuitive graphical interpretation of the weighted arithmetic and harmonic means of two positive values
together with two key results about the weighted harmonic mean that justify their use in several fields of application. Among them we can mention image processing, curve and surface generation,  numerical
 approximation of the solution of hyperbolic conservation laws apart from more traditional uses in statistics and physics. Perhaps the better known problem where the weighted harmonic mean appears is in the computation of the average speed of a vehicle that drives
along a path divided into two parts of different lengths $s_1$ and $s_2$ at constant speed $v_1$ and $v_2$ respectively, that is
\begin{equation*}
v_a=\frac{s_1+s_2}{t_1+t_2}=\frac{s_1+s_2}{\frac{s_1}{v_1}+\frac{s_2}{v_2}}=\frac{1}{w_1\frac{1}{v_1}+w_2\frac{1}{v_2}},
\end{equation*}
with $w_1=\frac{s_1}{s_1+s_2}, w_2=\frac{s_2}{s_1+s_2}.$

 The weighted harmonic mean $H_w$ is given in the following definition.

\begin{definition}\label{M2}
Given $a_1>0,$ $a_2>0$ two positive real numbers and two weights $w_1>0,$ $w_2>0$ with $w_1+w_2=1,$ the weighted harmonic mean of $a_1$ and $a_2$ is defined by
\begin{equation*} \label{eq:SpeedWH}
	H_w(a_1,a_2)= \dfrac{a_1 a_2}{w_1 a_2 + w_2 a_1}.
\end{equation*}
\end{definition}

We now present two particular properties, which have been already used in \cite{OT4} in order to work with a nonlinear reconstruction for nonuniform grids adapted to the potential presence of jump discontinui\-ties on the signal. The first property has to do with the adaptation in case of jump discontinuities, while the second property is related to the order of approximation attained by the nonlinear reconstruction operator, see \cite{OT4} for more details.

\begin{lemma} \label{lemaAcotacionMinimo}
If $a_1 >0 $ and $a_2 > 0,$  the weighted harmonic mean is bounded as follows
	\begin{equation} \label{eq:boundedVnu}
		H_w(a_1,a_2) < \min\left\{ \dfrac{1}{w_1}a_1,\dfrac{1}{w_2}a_2\right\}.
	\end{equation}
\end{lemma}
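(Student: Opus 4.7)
The plan is to prove each of the two inequalities comprising the minimum separately, since they are entirely symmetric in the roles of $(a_1,w_1)$ and $(a_2,w_2)$. I would observe at the outset that the denominator $w_1 a_2 + w_2 a_1$ of $H_w(a_1,a_2)$ is strictly positive (being a sum of products of positive numbers), so cross-multiplying by it preserves strict inequalities.

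For the first bound, $H_w(a_1,a_2) < \frac{1}{w_1} a_1$, I would multiply both sides by $w_1(w_1 a_2 + w_2 a_1) > 0$ to turn the claim into
\begin{equation*}
w_1 a_1 a_2 < a_1 (w_1 a_2 + w_2 a_1).
\end{equation*}
Expanding the right-hand side and cancelling the common term $w_1 a_1 a_2$ reduces this to $0 < w_2 a_1^2$, which holds because $w_2 > 0$ and $a_1 > 0$. The analogous manipulation, with the roles of the indices interchanged, gives $H_w(a_1,a_2) < \frac{1}{w_2} a_2$ from the obvious inequality $0 < w_1 a_2^2$.

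Combining these two strict inequalities yields $H_w(a_1,a_2) < \min\{\frac{1}{w_1} a_1, \frac{1}{w_2} a_2\}$, which is (\ref{eq:boundedVnu}).

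Honestly, there is no serious obstacle in this proof: the content of the lemma is essentially the positivity of $w_2 a_1^2$ (and its symmetric counterpart) disguised by the harmonic-mean formula. The only place to be attentive is in the sign of the factor used to clear denominators when cross-multiplying, and this is immediate from the standing positivity hypotheses on $a_1, a_2, w_1, w_2$. I would perhaps add a short remark noting that equality is never attained (both bounds are strict), since this is exactly the property that will later drive the adaptation of the reconstruction operator in the presence of jumps.
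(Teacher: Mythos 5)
Your proof is correct, and it is essentially the same argument the paper uses: the paper proves only the general $n$-ary version (Lemma~\ref{minimon}), where it factors out $a_{i_0}/w_{i_0}$ and observes that the denominator of the remaining fraction equals its numerator plus strictly positive terms, which for $n=2$ is exactly your reduction to $0<w_2a_1^2$ and $0<w_1a_2^2$ after clearing the positive denominator. Nothing is missing; the strictness you emphasize is indeed the point the paper relies on later.
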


\begin{lemma}\label{lemaOrdenMedias}
Let $a>0$ a fixed positive real number, and let $a_1\geq a$ and $a_2\geq a.$ If  $\, |a_1-a_2|= O(h)$, then the weighted harmonic mean is also
close to the weighted arithmetic mean $M_w(a_1,a_2)=w_1 a_1+ w_2 a_2,$
	\begin{equation}\label{eq:distMVnu}
		|M_w(a_1, a_2)-H_w(a_1,a_2)|= \dfrac{w_1 w_2}{w_1 a_2+w_2 a_1}(a_1-a_2)^2= O(h^2).
	\end{equation}

\end{lemma}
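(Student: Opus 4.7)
The plan is to prove the identity by a direct algebraic manipulation, and then derive the $O(h^2)$ estimate from the hypothesis that $a_1, a_2 \geq a > 0$, which keeps the denominator bounded away from zero.

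First I would put $M_w(a_1, a_2) - H_w(a_1, a_2) = w_1 a_1 + w_2 a_2 - \frac{a_1 a_2}{w_1 a_2 + w_2 a_1}$ over the common denominator $w_1 a_2 + w_2 a_1$, giving the single fraction
\begin{equation*}
M_w(a_1,a_2) - H_w(a_1,a_2) = \frac{(w_1 a_1 + w_2 a_2)(w_1 a_2 + w_2 a_1) - a_1 a_2}{w_1 a_2 + w_2 a_1}.
\end{equation*}
The bulk of the work is expanding the numerator. Using the constraint $w_1 + w_2 = 1$ (so $w_1^2 + w_2^2 = 1 - 2 w_1 w_2$), the cross terms collapse and after collecting like terms the numerator becomes $w_1 w_2 (a_1^2 - 2 a_1 a_2 + a_2^2) = w_1 w_2 (a_1 - a_2)^2$. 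This yields exactly the claimed identity
\begin{equation*}
M_w(a_1,a_2) - H_w(a_1,a_2) = \frac{w_1 w_2}{w_1 a_2 + w_2 a_1} (a_1 - a_2)^2,
\end{equation*}
and incidentally shows this quantity is nonnegative, so the absolute value in the statement is only a matter of form.

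For the $O(h^2)$ estimate, I would use the hypothesis $a_1, a_2 \geq a > 0$ to bound the denominator from below: $w_1 a_2 + w_2 a_1 \geq (w_1 + w_2) a = a$, so $\tfrac{w_1 w_2}{w_1 a_2 + w_2 a_1} \leq \tfrac{w_1 w_2}{a}$, a constant independent of $h$. Combined with the assumption $|a_1 - a_2| = O(h)$, this gives the $O(h^2)$ bound immediately.

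The proof is essentially a routine algebraic identity with no genuine obstacle; the main (and only) thing to be careful about is the expansion of $(w_1 a_1 + w_2 a_2)(w_1 a_2 + w_2 a_1)$ and the use of $w_1 + w_2 = 1$ to recognize $w_1^2 + w_2^2 - 1 = -2 w_1 w_2$, which is what makes the numerator factor as a perfect square.
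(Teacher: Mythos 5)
Your proof is correct and follows essentially the same route as the paper, which establishes the identity for general $n$ (Lemma \ref{ordenn}) by putting $M_w-H_w$ over the common denominator and using $(w_1+\cdots+w_n)^2=1$ to make the numerator collapse into $\sum_{i<j}w_iw_j(a_i-a_j)^2\prod_{k\neq i,j}a_k$; your computation is exactly the $n=2$ case. Your explicit lower bound $w_1a_2+w_2a_1\geq a$ on the denominator is a welcome detail that the paper leaves implicit.
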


A way of intuitively check these two properties graphically is by using the following interpretation. Given $a_1,$ $a_2$ two positive numbers and considering $H_w$ the weighted harmonic mean
of these values, we can build the following two parabolas
\begin{equation}\label{eqA:parabolas}
	\begin{array}{lllcl}
			p_1(x) & = & \dfrac{a_1 x_H - \frac{H_w}{2}}{x_H(1-x_H)} x^2 & + & \dfrac{\frac{H_w}{2} - a_1 {x_H}^2}{x_H(1-x_H)} x,\\
			p_2(x) & = & \dfrac{\frac{H_w}{2} + a_2(x_H - 1)}{x_H(x_H-1)} x^2 & - & \dfrac{\frac{H_w}{2} + a_2 (x_H-1)(x_H+1)}{x_H(x_H-1)} x + a_2,
	\end{array}
\end{equation}
\noindent where $x_H$ is defined as the abscissa of the point where both parabolas intersect inside the trapezoid delimited by the four vertices $(0,0),$ $(1,0),$ $(1,a_1),$ $(0,a_2).$ Its value is given by
\begin{equation} \label{eq:xSpeedWH}
	x_H = \dfrac{w_1 a_2}{w_1 a_2 + w_2 a_1}.
\end{equation}

\begin{remark}
Geometrically,  one can build the parabolas $p_1(x)$ and $p_2(x)$ as the unique polynomials of degree less or equal to $2$ such that
they interpolate the points $\{(0,0), (\frac{1}{2},(\frac{1}{4}+\frac{1}{8 w_1})a_1+(\frac{1}{4}-\frac{1}{8 w_2})a_2), (1,a_1)\},$
and $\{(0,a_2), (\frac{1}{2},(\frac{1}{4}-\frac{1}{8 w_1})a_1+(\frac{1}{4}+\frac{1}{8 w_2})a_2), (1,0)\}$ respectively.
\end{remark}

In Figure \ref{fig1-Par2} upper-left we can see the representation of the trapezoid with the two parabolas intersecting at a point with abscissa $x_H,$ for similar values of $a_1$ and $a_2$ and for a value of the
weights $w_1=\frac{7}{10},$ $w_2=\frac{3}{10}.$
In this case, it is appreciated a similar value of the weighted harmonic and arithmetic means. This particular situation relates with Lemma \ref{lemaOrdenMedias}.
In Figure  \ref{fig1-Par2} bottom-left we can see the case for quite different values of $a_1$ and $a_2.$  Now, it can be  observed that the weighted harmonic mean remains much closer to the minimum
value between $a_1$ and $a_2$ than the weighted arithmetic mean. This situation has a close relation with Lemma \ref{lemaAcotacionMinimo}.
In Figure \ref{fig1-Par2} upper-right  and bottom-right we consider the case of having equal weights $w_1=w_2=\frac{1}{2},$ which gives rise to the usual arithmetic and harmonic means.
The observations are the same as in the weighted case, although it is interesting to notice that the parabolas degenerate in the two diagonals of the trapezoid.

\begin{figure}[!ht]
\centerline{\includegraphics[width=7cm]{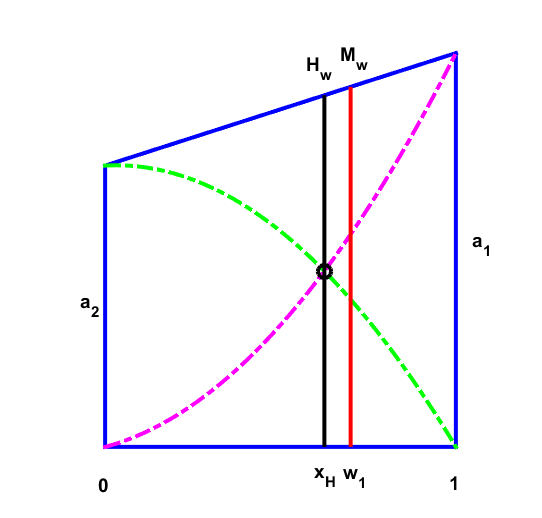}
\includegraphics[width=7cm]{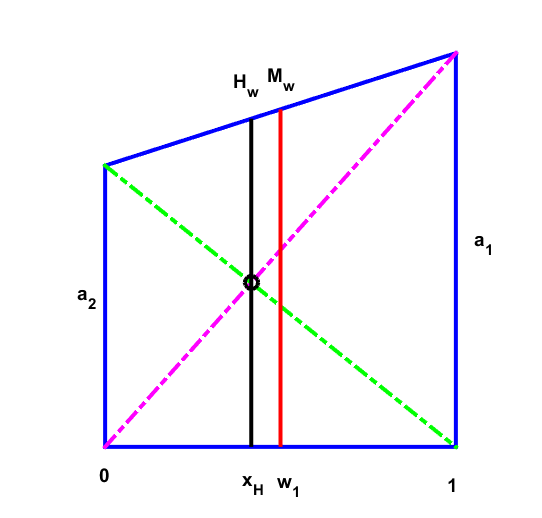}}
\centerline{\includegraphics[width=7cm]{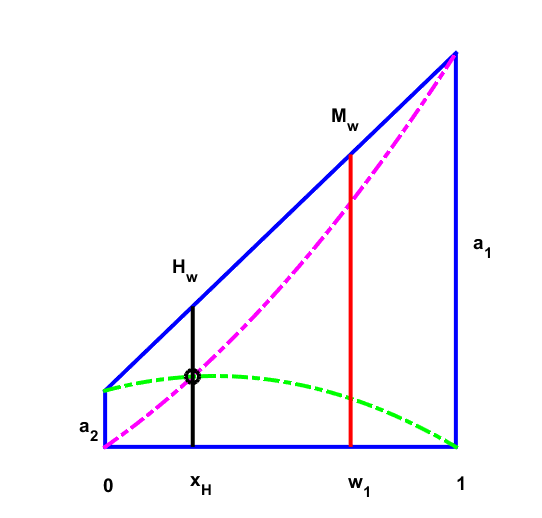}
\includegraphics[width=7cm]{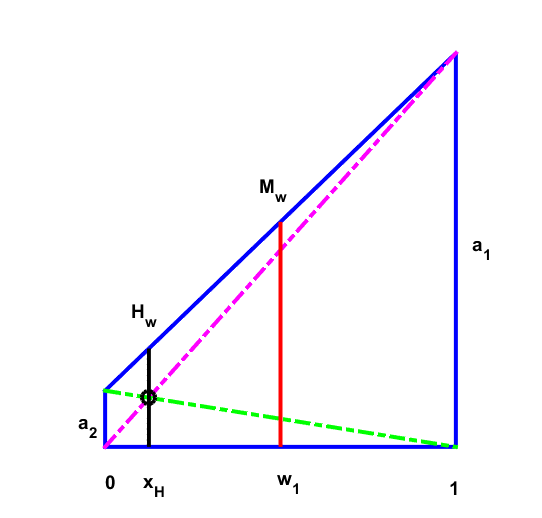}}
\caption{Representation of the weighted harmonic and arithmetic means. Upper-left: $w_1=0.7,$ $w_2=0.3,$  $a_1=14,$ $a_2=10.$  Upper-right: $w_1=0.5,$ $w_2=0.5,$ $a_1=14,$ $a_2=10.$
Bottom-left:  $w_1=0.7,$ $w_2=0.3,$ $a_1=14,$ $a_2=2.$  Bottom-right: $w_1=0.5,$ $w_2=0.5,$  $a_1=14,$ $a_2=2.$
In black the weighted harmonic mean, in red the weighted arithmetic mean, in dashed magenta line the parabola $p_1(x)$ and in dashed green line the parabola $p_2(x).$
} \label{fig1-Par2}
\end{figure}
There are infinitely many ways of defining two parabolas which degenerate in the two diagonals for $w_1=w_2=\frac{1}{2},$ and intersect at the abscissa $x_H$ where $a_2 + (a_1 - a_2) x_H = H_w.$
In fact, for each ordinate of the type $y_H=f(w_1, w_2) a_1 x_H$
with $f(w_2,w_1)=1$ for $w_1=w_2=\frac{1}{2},$ the parabolas interpolating the points $\{(0,0), (x_H,y_H), (1,a_1)\}$ and
$\{(0,a_2), (x_H,y_H), (1,0)\}$ satisfy both requirements. In particular, we remark three particular cases because of their symmetry or simplicity.\\
\\
\textbf{Case 1: $f(w_1, w_2)=\dfrac{w_2}{w_1}$}.\\
In this case we have
\begin{equation}
	y_H = \dfrac{w_2}{w_1} a_1 x_H = \dfrac{w_2 a_2 a_1}{w_1 a_2 + w_2 a_1},
\end{equation}
and the parabolas take the form
\begin{equation}\label{eqA:parabolas01}
	\begin{array}{lcl}
			p_1(x) & = & \dfrac{a_1}{1-x_H}\left[ \left(1-\dfrac{w_2}{w_1}\right)x^2 - \left(x_H-\dfrac{w_2}{w_1}\right)x\right],\\
			p_2(x) & = & a_2-a_2 x.
	\end{array}
\end{equation}
\textbf{Case 2: $f(w_1, w_2)=1$}.\\
In this case we get
\begin{equation}
	y_H = a_1 x_H,
\end{equation}
and the parabolas are given by
\begin{equation}\label{eqA:parabolas02}
	\begin{array}{lcl}
			p_1(x) & = & a_1 x,\\
			p_2(x) & = & \left(\dfrac{a_1}{x_H-1} + \dfrac{a_2}{x_H} \right) x^2 - \left(\dfrac{a_1}{x_H-1} + \dfrac{a_2(x_H+1)}{x_H} \right) x + a_2.
	\end{array}
\end{equation}
\textbf{Case 3. $f(w_1, w_2)= \dfrac{1}{2 w_1}$}.\\
In this case
\begin{equation}
	y_H = \dfrac{1}{2 w_1}a_1 x_H,
\end{equation}
and the parabolas are given in (\ref{eqA:parabolas}).\\
Notice that in the first two cases one of the parabolas remains always equal to one of the diagonals of the trapezoid for all values of
$w_1.$ Since the first and second cases are symmetrical, we will consider only the first and third cases from now on.
In the next section, we will present the geometrical extension of the given results to the three variables case. The proofs will be omitted because they appear later
in the general n-dimensional case.
\section{Geometrical interpretation of the weighted harmonic mean of three positive values} \label{sec3}
In this section we give the corresponding results about the weighted harmonic mean for the case of dealing with three positive values. These results can be generalized to
$n$ values with $n$ a positive integer number, and we will address this situation in the next section, where we will include the proofs.

\begin{definition}\label{M3}
Given $a_1>0,$ $a_2>0,$ $a_3>0$  three positive real numbers and the weights $w_1>0,$ $w_2>0,$ $w_3>0$ with $w_1+w_2+w_3=1,$ their weighted harmonic mean is defined by
\begin{equation*} \label{eq:HW3}
	H_w(a_1,a_2,a_3)= \dfrac{a_1 a_2 a_3}{w_1 a_2 a_3 + w_2 a_1 a_3 + w_3 a_1 a_2}.
\end{equation*}
\end{definition}

\begin{lemma} \label{lemaAcotacionMinimo3}
If $a_1 > 0,$ $a_2 > 0,$ $a_3 >0$ the weighted harmonic mean is bounded as follows
	\begin{equation} \label{eq:boundedVnu3}
		H_w(a_1,a_2,a_3) < \min\left\{ \dfrac{1}{w_1}a_1,\dfrac{1}{w_2}a_2,\dfrac{1}{w_2}a_2\right\}.
	\end{equation}
\end{lemma}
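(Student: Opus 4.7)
The plan is to exploit the standard reciprocal form of the weighted harmonic mean. Dividing both the numerator and the denominator of
\[
H_w(a_1,a_2,a_3)=\frac{a_1a_2a_3}{w_1a_2a_3+w_2a_1a_3+w_3a_1a_2}
\]
by $a_1a_2a_3$ rewrites it as
\[
H_w(a_1,a_2,a_3)=\frac{1}{\dfrac{w_1}{a_1}+\dfrac{w_2}{a_2}+\dfrac{w_3}{a_3}}.
\]
This is the representation that trivializes the bound, so my first step is simply to perform this algebraic simplification (implicitly this is the $n=3$ case of the classical identity used for the harmonic mean).

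Next, because every $w_i>0$ and every $a_i>0$, each of the three reciprocal summands is strictly positive. Consequently, for any fixed index $i\in\{1,2,3\}$ one has the strict inequality
\[
\frac{w_1}{a_1}+\frac{w_2}{a_2}+\frac{w_3}{a_3}>\frac{w_i}{a_i},
\]
where the two discarded terms are strictly positive. Inverting (and noting that both sides are positive) reverses the inequality and yields $H_w(a_1,a_2,a_3)<a_i/w_i$ for each $i\in\{1,2,3\}$.

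Finally, since the strict inequality holds simultaneously for all three indices, it holds for the smallest of the three upper bounds, giving
\[
H_w(a_1,a_2,a_3)<\min\Bigl\{\tfrac{1}{w_1}a_1,\,\tfrac{1}{w_2}a_2,\,\tfrac{1}{w_3}a_3\Bigr\},
\]
which is the claimed bound (with the apparent typo in the statement corrected in the third entry). There is no real obstacle here: the only subtlety is recognizing that one should pass to the reciprocal form, after which the result reduces to the elementary fact that a sum of positive numbers strictly exceeds any one of its summands. The same argument will then generalize verbatim to $n$ variables in the next section.
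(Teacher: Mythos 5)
Your proof is correct and rests on the same elementary observation as the paper's own argument (given for the general $n$-variable case in Lemma \ref{minimon}, since the paper omits the three-variable proof): the denominator sum strictly exceeds any single one of its positive summands. The paper phrases this by factoring $a_{i_0}/w_{i_0}$ out of the product form of $H_w$ rather than passing to the reciprocal form $1/\sum_i (w_i/a_i)$, but the content is identical, and you are right that the third entry of the $\min$ in the statement should read $\tfrac{1}{w_3}a_3$.
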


\begin{lemma}\label{lemaOrdenMedias3}
Let $a>0$ a fixed positive real number, and let $a_1\geq a,$ $a_2\geq a,$ $a_3\geq a.$ If  $|a_1-a_2|= O(h),$ $|a_1-a_3|= O(h),$  then the weighted harmonic mean is also
close to the weighted arithmetic mean $M_w(a_1,a_2,a_3)=w_1 a_1+ w_2 a_2+w_3 a_3,$
	\begin{eqnarray}\label{eq:distMVnu3} \notag
		|M_w(a_1, a_2,a_3)-H_w(a_1,a_2,a_3)|&=& \dfrac{w_1 w_2(a_1-a_2)^2 a_3+w_1 w_3(a_1-a_3)^2 a_2+w_2 w_3(a_2-a_3)^2 a_1 }{w_1 a_2 a_3 + w_2 a_1 a_3 + w_3 a_1 a_2}\\
&=& O(h^2).
	\end{eqnarray}

\end{lemma}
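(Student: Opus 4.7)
The plan is to compute $M_w - H_w$ directly by putting everything over the common denominator $D := w_1 a_2 a_3 + w_2 a_1 a_3 + w_3 a_1 a_2$ (the denominator of $H_w$), expand, and use the constraint $w_1+w_2+w_3=1$ to recognize the squared-difference structure.

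First I would write
\begin{equation*}
M_w(a_1,a_2,a_3)-H_w(a_1,a_2,a_3) \;=\; \frac{(w_1 a_1 + w_2 a_2 + w_3 a_3)\,D \;-\; a_1 a_2 a_3}{D},
\end{equation*}
and expand the product in the numerator into the nine monomial terms. The three ``diagonal'' contributions yield $(w_1^2+w_2^2+w_3^2)\,a_1 a_2 a_3$, and the six remaining cross terms group naturally into $w_1 w_2\, a_3(a_1^2+a_2^2)$, $w_1 w_3\, a_2(a_1^2+a_3^2)$, and $w_2 w_3\, a_1(a_2^2+a_3^2)$. The key algebraic move is then to use $(w_1+w_2+w_3)^2 = 1$, which gives
\begin{equation*}
w_1^2+w_2^2+w_3^2 - 1 \;=\; -2(w_1 w_2 + w_1 w_3 + w_2 w_3),
\end{equation*}
so the $-a_1 a_2 a_3$ contributes exactly the cross-term $-2 w_i w_j\, a_k\, a_i a_j$ needed to complete each $(a_i-a_j)^2$ square. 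Collecting,
\begin{equation*}
(M_w-H_w)\,D \;=\; w_1 w_2\, a_3 (a_1-a_2)^2 \;+\; w_1 w_3\, a_2 (a_1-a_3)^2 \;+\; w_2 w_3\, a_1 (a_2-a_3)^2,
\end{equation*}
which is the claimed identity (and shows, in particular, that $M_w \geq H_w$ with equality iff $a_1=a_2=a_3$, recovering the classical AM--HM inequality as a by-product).

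For the $O(h^2)$ estimate, I would use the hypothesis $a_i \geq a > 0$ to bound the denominator from below by $D \geq a^2 (w_1+w_2+w_3) = a^2$. For the numerator, the hypotheses give $(a_1-a_2)^2=O(h^2)$ and $(a_1-a_3)^2=O(h^2)$, and the triangle inequality $|a_2-a_3| \le |a_2-a_1|+|a_1-a_3| = O(h)$ yields $(a_2-a_3)^2=O(h^2)$; the remaining factors $a_k$ are bounded because they differ from $a_1$ by $O(h)$. Since the weights are fixed constants, each term of the numerator is $O(h^2)$, completing the estimate.

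I do not expect any serious obstacle: the only subtle step is the symmetric bookkeeping in the expansion, and the cleanest way to keep it tidy is to verify the $-2 w_i w_j$ dispatch explicitly once (say for the $(a_1-a_2)^2$ term) and then appeal to the symmetry of the remaining two pairs. In the write-up this could also be done compactly by noting the polynomial identity
\begin{equation*}
\Bigl(\sum_i w_i a_i\Bigr)\Bigl(\sum_i w_i \prod_{j\neq i} a_j\Bigr) - \prod_i a_i \;=\; \sum_{i<j} w_i w_j (a_i-a_j)^2 \!\!\!\prod_{k\neq i,j}\!\! a_k,
\end{equation*}
which is a straightforward consequence of $\sum_i w_i = 1$ and will reappear in its full generality in the $n$-dimensional treatment of Section~\ref{sec4}.
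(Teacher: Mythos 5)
Your proposal is correct and follows essentially the same route as the paper: the paper omits the proof of the three-variable lemma and instead proves the general $n$-variable version (Lemma \ref{ordenn}) by exactly this computation, namely putting $M_w-H_w$ over the denominator of $H_w$, pairing the cross terms into $w_iw_j(a_i^2+a_j^2)\prod_{k\neq i,j}a_k$, using $\sum_i w_i=1$ to convert the diagonal contribution minus $\prod_k a_k$ into $-2\sum_{i<j}w_iw_j a_ia_j\prod_{k\neq i,j}a_k$, and concluding with the triangle inequality $|a_i-a_j|\leq|a_i-a_1|+|a_1-a_j|=O(h)$. The general polynomial identity you state at the end is precisely the one established in the paper's Section \ref{sec4}.
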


The following two theorems are dedicated to write in a formal way the geometrical interpretation of the weighted harmonic mean, generalizing the expressions for the two variables case given
in (\ref{eqA:parabolas}), and (\ref{eqA:parabolas01}). The case of expressions (\ref{eqA:parabolas02}) could be treated in a similar way, and we will not consider it, since it is a symmetrical
version of case (\ref{eqA:parabolas01}).
Let us first introduce the following notations for the vertices of a straight prism with triangular base

\begin{eqnarray*}
B_1&=&(1,0,0), \quad B_2=(0,1,0), \quad B_3=(0,0,0),\\
P_1&=&(1,0,a_1), \quad P_2=(0,1,a_2), \quad P_3=(0,0,a_3),\\
\end{eqnarray*}
where $B_i, i=1,2,3,$ stand for the vertices of the base and the corresponding $P_i$ for the vertices located at the heights of the prism through the points $B_i$
satisfying that the length of the segment between $P_i$ and $B_i$ is $a_i.$ We will also use the barycenter of the points $B_i$
\begin{equation*}
GM_w:=w_1 B_1+w_2 B_2 + w_3B_3.
\end{equation*}

The first theorem amounts to the generalization of the expressions in (\ref{eqA:parabolas01}) and can be written as follows.
\begin{theorem} \label{teo:3Dcase1}
Let us consider the plane $\Pi$ which passes through the points $P_1,$ $P_2$ and $P_3$ given by the equation
\begin{equation}\label{eq3A:Plano_P1P2P3_1}
	\Pi \equiv  x_{1}(a_{3}-a_{1}) + x_{2}(a_{3}-a_{2}) + x_{3}-a_{3}=0.
\end{equation}
Let us also consider the plane $V_{3}$ which passes through the points $B_1,$ $B_2$ and $P_3$ given by the equation
\begin{equation} \label{eqPlane1-3D}
V_3 \equiv x_1+x_2+\frac{x_3}{a_3}=1,
\end{equation}
and the two paraboloids $V_1$ and $V_2$ given by the equations
\begin{eqnarray} \label{eqParab1-3D}
V_1 &\equiv&	x_{3}=b_{1}{x_{1}}^{2} + (a_{1} - b_{1})x_{1}, \quad \textrm{which passes through} \ P_1, B_2, B_3,\\ \notag
V_2 &\equiv&    x_{3}=b_{2}{x_{2}}^{2} + (a_{2} - b_{2})x_{2}, \quad  \textrm{which passes through} \ B_1, P_2, B_3,
\end{eqnarray}
where the coefficients $b_i$ are given by
\begin{equation}\label{eqnHwT1:Coef_bi_xH3D}
	b_{i}=\dfrac{H_{w}}{\bar{x}_{i}(\bar{x}_{i}-1)} (w_{3}-w_{i}), \qquad i=1,2.
\end{equation}\\
Then, the system of equations formed by (\ref{eqPlane1-3D}) and (\ref{eqParab1-3D}) has a unique solution $(\bar{x}_{1},\bar{x}_2,\bar{x}_{3})$ given by
\begin{equation}\label{eqnHwT1:Punto_xH3D}
\bar{x}_{1}=w_{1} \dfrac{H_{w}}{a_{1}}, \quad \bar{x}_{2}=w_{2} \dfrac{H_{w}}{a_{2}}, \quad	\bar{x}_{3}=w_{3}H_{w}.
\end{equation}
Moreover, the height of the prism through the point $(\bar{x}_1,\bar{x}_2,0)$ coincides with the weighted harmonic mean $H_w$ of $a_1,$ $a_2,$ $a_3$ and the height of the prism through the barycenter of the
triangular base $GM_w=w_1B_1+w_2B_2+w_3B_3$ coincides with the weighted arithmetic mean.
\end{theorem}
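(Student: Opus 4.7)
My plan is to prove the statement in three stages: first verify that the candidate $(\bar{x}_1,\bar{x}_2,\bar{x}_3)$ lies on each of the three surfaces $V_3,V_1,V_2$; then argue that this intersection is unique inside the prism; and finally compute the height of the plane $\Pi$ at the two specified points.

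The engine of every computation is the elementary identity obtained by inverting Definition \ref{M3},
\begin{equation*}
\frac{1}{H_w}=\frac{w_1}{a_1}+\frac{w_2}{a_2}+\frac{w_3}{a_3}.
\end{equation*}
Substituting $(\bar{x}_1,\bar{x}_2,\bar{x}_3)=(w_1 H_w/a_1,\,w_2 H_w/a_2,\,w_3 H_w)$ into the plane equation (\ref{eqPlane1-3D}) immediately yields $H_w\bigl(w_1/a_1+w_2/a_2+w_3/a_3\bigr)=1$. For $V_1$ I would first rewrite its equation in the factored form $x_3=b_1 x_1(x_1-1)+a_1 x_1$; then substituting $\bar{x}_1$ together with the expression (\ref{eqnHwT1:Coef_bi_xH3D}) for $b_1$ makes the quadratic piece collapse to $H_w(w_3-w_1)$ and the linear piece to $w_1 H_w$, whose sum is $w_3 H_w=\bar{x}_3$. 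The case of $V_2$ is handled identically under the symmetry $1\leftrightarrow 2$.

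For uniqueness, $V_1$ and $V_2$ are parabolic cylinders with axes along $x_2$ and $x_1$ respectively, each expressing $x_3$ as a quadratic in a single variable that takes the value $0$ at one base vertex and $a_i$ at the opposite top vertex. Consequently, eliminating $x_3$ via $V_1=V_2$ and combining with $V_3$ reduces the system to a polynomial equation in $x_1$; I would then isolate the correct root by extracting $x_1-\bar{x}_1$ as a factor and checking that the companion root is forced outside the triangular base. The height computation at the barycenter is immediate: $\Pi(w_1,w_2)=a_3-w_1(a_3-a_1)-w_2(a_3-a_2)=a_3 w_3+w_1 a_1+w_2 a_2=M_w$. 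At $(\bar{x}_1,\bar{x}_2)$, expanding $\Pi(\bar{x}_1,\bar{x}_2)=a_3+\bar{x}_1(a_1-a_3)+\bar{x}_2(a_2-a_3)$ and invoking once more the identity for $1/H_w$ makes the $a_3/a_i$ terms cancel, leaving $(w_1+w_2+w_3)H_w=H_w$.

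The main obstacle is the uniqueness step; the verifications and both height computations are routine applications of the same identity for $1/H_w$. Ruling out spurious intersection points rigorously requires either a monotonicity argument for the quadratics $b_i x_1^2+(a_i-b_i)x_1$ on $[0,1]$, or the explicit factorization of the reduced polynomial just mentioned, showing that the physically meaningful root inside the base is precisely $\bar{x}_1$.
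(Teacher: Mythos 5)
Your verification that $(\bar{x}_1,\bar{x}_2,\bar{x}_3)$ satisfies (\ref{eqPlane1-3D}) and (\ref{eqParab1-3D}), and your two height computations on $\Pi,$ are correct and are essentially what the paper does: it omits a dedicated three-dimensional proof and handles this theorem through its $n$-dimensional version, Theorem \ref{teo:NDcase1}, where the substitution is declared immediate and parts a) and b) are obtained, exactly as you propose, by intersecting the vertical lines through $(\bar{x}_1,\bar{x}_2,0)$ and through $GM_w$ with $\Pi$ and using $1/H_w=w_1/a_1+w_2/a_2+w_3/a_3.$ The entire substance of the theorem is therefore the uniqueness claim, and that is the one part you leave as a plan rather than a proof, so the proposal is incomplete exactly where it matters.

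Moreover, both routes you sketch for uniqueness run into concrete obstacles. Writing $q_i(t)=b_i t(t-1)+a_i t,$ elimination of $x_3$ and $x_2$ gives $q_2\bigl(1-x_1-q_1(x_1)/a_3\bigr)-q_1(x_1)=0,$ which is a quartic in $x_1$ whenever $w_1\neq w_3$ and $w_2\neq w_3$; after removing the factor $x_1-\bar{x}_1$ you are left with up to three companion roots, not one. The fallback monotonicity argument also fails in general, because $|b_i|$ can exceed $a_i$: for the paper's own data $a=(3,4,6),$ $w=(0.2,0.2,0.6)$ one gets $b_1=-26/3,$ so $q_1$ increases and then decreases on $[0,1].$ Most importantly, your plan would at best prove uniqueness of the intersection \emph{inside the prism}, whereas the theorem asserts uniqueness of the solution of the system with no such restriction --- and for the same data the eliminated quartic has a second real root in $(-1,0),$ so a second solution of the system genuinely exists and the unrestricted claim cannot be established by any argument; the correct statement needs the same qualifier ``inside the trapezoid/prism'' that Section \ref{sec2} uses in the planar case. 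Note finally that the paper's own uniqueness argument (in the proof of Theorem \ref{teo:NDcase1}) is of a completely different character: it subtracts two hypothetical solutions and analyses the resulting system in $z=\bar{x}-x^{\prime}$ as a homogeneous one via the nonvanishing of the $b_i.$ If you intend to prove the theorem as literally stated you must engage with that mechanism; if you localize, you must flag explicitly that you are proving a weaker (but, as the example shows, the only tenable) version.
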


The second theorem deals with the generalization of expressions (\ref{eqA:parabolas}).

\begin{theorem} \label{teo:3Dcase3}
Let us consider the plane $\Pi$ which passes through the points $P_1,$ $P_2$ and $P_3$ given by the equation
\begin{equation}\label{eq3A:Plano_P1P2P3_3}
	\Pi \equiv  x_{1}(a_{3}-a_{1}) + x_{2}(a_{3}-a_{2}) + x_{3}-a_{3}=0.
\end{equation}
Let us also consider the paraboloid $V_3^*$ passing through $B_1,$ $B_2,$ $P_3$ given by
\begin{equation}\label{eqnHwT2:Parabol_Pin3D}
	\begin{aligned}
		x_{3} &= a_3 + (c_{1}x_{1}(x_1-1)-a_3 x_1) + (c_{2}x_{2}(x_2-1)-a_3 x_2), \\
	\end{aligned}
\end{equation}\\
where the coefficients $c_i$ are given by
\begin{equation}\label{eqnHwT1:Coef_ci_xH3D}
	c_{i}=\dfrac{\frac{H_{w}}{3}+(\bar{x}_{1}+\bar{x}_2-1)a_3}{2 \bar{x}_i(\bar{x}_{i}-1)}, \qquad i=1,2,
\end{equation}
and the two paraboloids $V_1$ and $V_2$ given by the equations
\begin{eqnarray} \label{eqParab3-3D}
V_1^* &\equiv&	x_{3}=b_{1}{x_{1}}^{2} + (a_{1} - b_{1})x_{1}, \quad \textrm{which passes through} \ P_1, B_2, B_3,\\ \notag
V_2^* &\equiv&    x_{3}=b_{2}{x_{2}}^{2} + (a_{2} - b_{2})x_{2}, \quad  \textrm{which passes through} \ B_1, P_2, B_3,
\end{eqnarray}
where the coefficients $b_i$ are given by
\begin{equation}\label{eqnHwT1:Coef_bi3_xH3D}
	b_{i}=\dfrac{H_{w}}{\bar{x}_{i}(\bar{x}_{i}-1)} (\frac{1}{3}-w_{i}), \qquad i=1,2.
\end{equation}\\
Then, the system of equations formed by (\ref{eqnHwT2:Parabol_Pin3D}) and (\ref{eqParab3-3D}) has a unique solution $(\bar{x}_{1},\bar{x}_2,\bar{x}_{3})$ given by
\begin{equation}\label{eqnHwT2:Punto_xH3D}
\bar{x}_{1}=w_{1} \dfrac{H_{w}}{a_{1}}, \quad \bar{x}_{2}=w_{2} \dfrac{H_{w}}{a_{2}}, \quad	\bar{x}_{3}=\frac{H_{w}}{3}.
\end{equation}
Moreover, the height of the prism through the point $(\bar{x}_1,\bar{x}_2,0)$ coincides with the weighted harmonic mean $H_w$ of $a_1,$ $a_2,$ $a_3$ and the height of the prism through the barycenter of the
triangular base $GM_w=w_1B_1+w_2B_2+w_3B_3$ coincides with the weighted arithmetic mean.
\end{theorem}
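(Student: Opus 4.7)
The plan is to adapt the strategy used for Theorem \ref{teo:3Dcase1}: verify by direct substitution that the candidate point (\ref{eqnHwT2:Punto_xH3D}) satisfies each of the three surface equations, then evaluate the height of the plane $\Pi$ at $(\bar x_1,\bar x_2,0)$ and at the barycenter $GM_w$. Both parts rely on the single identity $\tfrac{w_1}{a_1}+\tfrac{w_2}{a_2}+\tfrac{w_3}{a_3}=\tfrac{1}{H_w}$, which is immediate from Definition \ref{M3}.

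First I would verify that the paraboloid $V_3^*$ interpolates $B_1$, $B_2$, $P_3$: at each of these vertices the factor $x_1(x_1-1)$ or $x_2(x_2-1)$ vanishes, reducing (\ref{eqnHwT2:Parabol_Pin3D}) to the required values $0$, $0$, $a_3$. For the paraboloids $V_i^*$ with $i=1,2$, the definition (\ref{eqnHwT1:Coef_bi3_xH3D}) gives $b_i\bar x_i(\bar x_i-1)=H_w(\tfrac13-w_i)$; combined with $a_i\bar x_i=w_iH_w$ this yields
\[
b_i\bar x_i^2+(a_i-b_i)\bar x_i=a_i\bar x_i+b_i\bar x_i(\bar x_i-1)=w_iH_w+H_w\bigl(\tfrac{1}{3}-w_i\bigr)=\tfrac{H_w}{3}=\bar x_3.
\]
For $V_3^*$ the key observation is $\bar x_1+\bar x_2-1=-w_3H_w/a_3$, which collapses the numerator of (\ref{eqnHwT1:Coef_ci_xH3D}) to $H_w(\tfrac13-w_3)$; summing the two $c_i\bar x_i(\bar x_i-1)$ contributions with $a_3(1-\bar x_1-\bar x_2)=w_3H_w$ produces $\bar x_3=H_w/3$ once more.

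For the height statement, solving (\ref{eq3A:Plano_P1P2P3_3}) for $x_3$ gives the affine formula $x_3=a_1x_1+a_2x_2+a_3(1-x_1-x_2)$. At $(\bar x_1,\bar x_2)$ this becomes $(w_1+w_2+w_3)H_w=H_w$ exactly as in Theorem \ref{teo:3Dcase1}, and at $GM_w=(w_1,w_2,0)$ it returns $w_1a_1+w_2a_2+w_3a_3=M_w$.

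The main obstacle is the uniqueness part of the claim, since eliminating $x_3$ from the three surface equations leaves a pair of quadratic relations between $x_1$ and $x_2$ whose Bezout bound permits up to four intersections. My plan to discard the spurious branches is to restrict attention to the interior of the triangular base $\{x_1,x_2>0,\,x_1+x_2<1\}$ of the prism: there the cylindrical paraboloids $V_1^*$ and $V_2^*$ are strictly monotone on the branch through $B_3$, while Lemma \ref{lemaAcotacionMinimo3} together with the identity above guarantees that $\bar x_1,\bar x_2\in(0,1)$ and $\bar x_1+\bar x_2=1-w_3H_w/a_3<1$. The remaining constraint imposed by $V_3^*$ then cuts out a single admissible intersection, singling out $(\bar x_1,\bar x_2,\bar x_3)$ as the unique solution in the natural geometric region.
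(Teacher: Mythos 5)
Your existence and height computations are fine and coincide with what the paper does (the paper omits the proof in Section 3 and gives it for general $n$ in Theorem \ref{teo:NDcase3}; the identities $a_i\bar x_i=w_iH_w$, $b_i\bar x_i(\bar x_i-1)=H_w(\tfrac13-w_i)$ and $a_3(1-\bar x_1-\bar x_2)=w_3H_w$ are exactly the ones needed). The genuine gap is in the uniqueness part, which is the only nontrivial claim. Your plan rests on two unsupported assertions. First, the monotonicity of $V_1^*$ and $V_2^*$ over the base triangle is false in general: the vertex of $x_3=b_ix_i^2+(a_i-b_i)x_i$ sits at $x_i=(b_i-a_i)/(2b_i)$, and since $b_i=-H_w(\tfrac13-w_i)/\bigl(\bar x_i(1-\bar x_i)\bigr)$ can be made arbitrarily large in modulus compared with $a_i$ (take $w_i$ small, so that $\bar x_i\to0$ while $\tfrac13-w_i\to\tfrac13$; e.g. $a_1=a_2=a_3=1$, $w_1=0.01$ gives $b_1\approx-32.7$ and a vertex at $x_1\approx0.52$), the parabola need not be monotone on $(0,1)$, and a horizontal slice can meet it twice there. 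Second, the sentence ``the remaining constraint imposed by $V_3^*$ then cuts out a single admissible intersection'' is precisely the statement to be proved, and no argument is offered; the Bezout count only bounds the number of intersections by four, it does not locate them. Note also that restricting to the interior of the base triangle proves a localized statement, whereas the theorem asserts uniqueness of the solution of the system with no such restriction.

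The paper's own argument is entirely different and purely algebraic: supposing a second solution $x'$ and setting $z=\bar x-x'$, the difference of the two systems is read as a homogeneous linear system in $z$; a nontrivial one-parameter family $z=\lambda v$ is shown, by evaluating the resulting quadratic identity at two distinct values of $\lambda$, to force $b_iv_i^2=0$ on the support of $v$, and the degenerate case $b_i=0$ there is excluded because it leads to $\sum_i c_i/a_i^2=0$ while all the $c_i$ share the same sign (that of $\tfrac13-w_3$), eventually giving $v=0$. To repair your proof you should either reproduce that computation for $n=3$ or actually carry out the elimination you allude to: substitute $x_3$ from $V_1^*$ and $V_2^*$ into $V_3^*$ and analyze the resulting pair of conics, which requires substantially more work than the paragraph you wrote.
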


\indent In Figures \ref{fig1:3D} and  \ref{fig2:3D} we represent the situation given in Theorem \ref{teo:3Dcase3}, being the situation of  Theorem \ref{teo:3Dcase1} similar.
In Figure \ref{fig1:3D}, in the left part, we show the paraboloids built with the values $a_1=3,$ $a_2=4,$ $a_3=6,$ with the weights $w_1=0.2,$ $w_2=0.2,$ $w_3=0.6,$ and in the right part, the planes
obtained for the case of dealing with equal weights $w_1=w_2=w_3=\frac{1}{3}.$ These plots correspond to the situation considered in Theorem \ref{teo:3Dcase3}. We observe how the paraboloids degenerate in planes
generalizing the case of the non-weighted harmonic mean.\\
In Figure \ref{fig2:3D}, we show the intersection of the three paraboloids for the same values and weights. It is interesting to compare the representation of the weighted harmonic mean $H_w,$ which coincides with
the height of the prism through the point $GH_w$ (orthogonal projection onto the base of the intersection point of the three paraboloids considered in Theorem \ref{teo:3Dcase3}), with the representation of the weighted arithmetic mean $M_w,$ which amounts to the height of the prism through the barycenter $GM_w$ of the vertices of the triangular base affected by the corresponding weights.

\begin{figure}[!ht]
\centerline{\includegraphics[width=7cm]{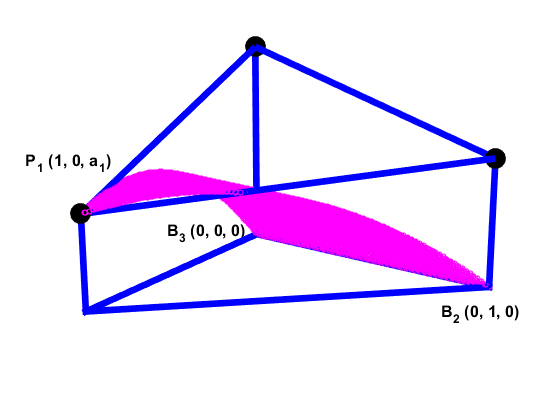}
\includegraphics[width=7cm]{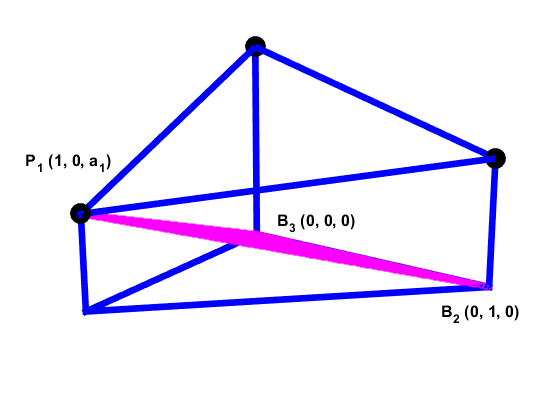}}
\centerline{\includegraphics[width=7cm]{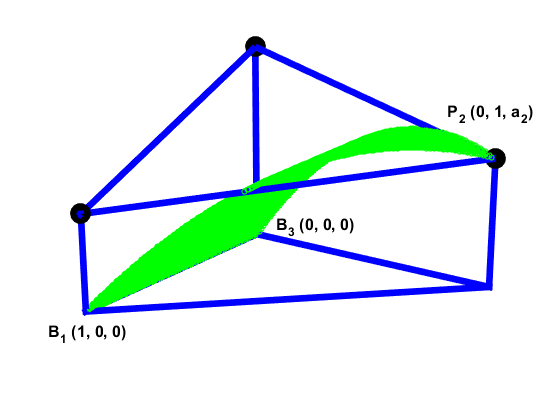}
\includegraphics[width=7cm]{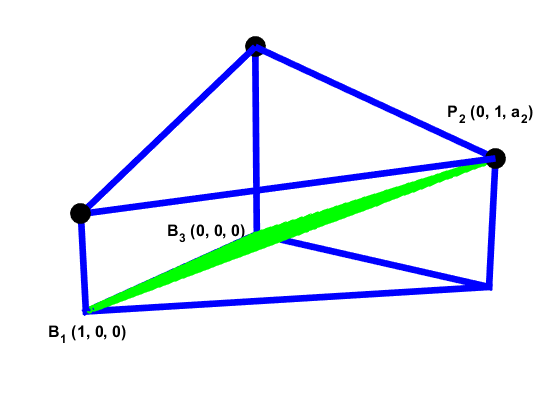}}
\centerline{\includegraphics[width=7cm]{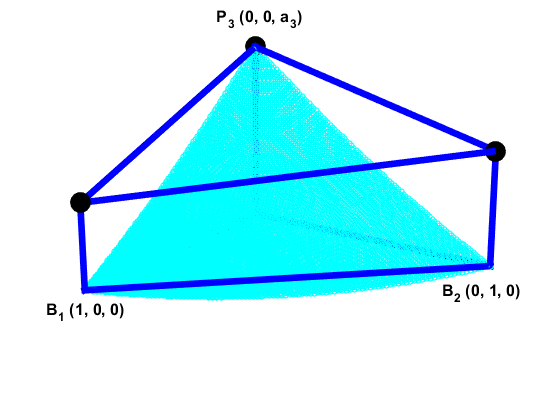}
\includegraphics[width=7cm]{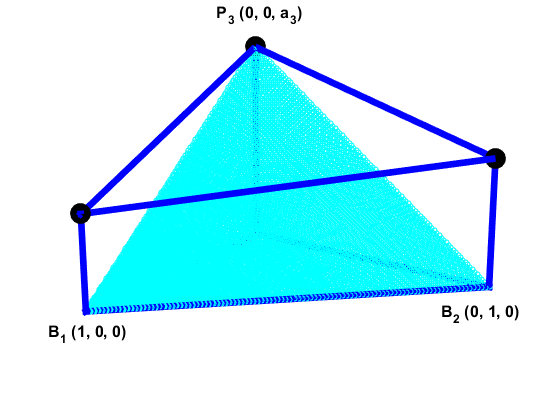}}
\caption{Representation of the three paraboloids considered in Theorem \ref{teo:3Dcase3} for the representation of the harmonic mean of the values
$a_1=3,$ $a_2=4,$ $a_3=6.$ Weights $w_1=0.2,$ $w_2=0.2,$ $w_3=0.6$ to the left and $w_1=w_2=w_3=\frac{1}{3}$ to the right. Upper: $V_1^*.$
Medium:  $V_2^*.$ Bottom: $V_3^*.$
} \label{fig1:3D}
\end{figure}

\begin{figure}[!ht]
\centerline{\includegraphics[width=7cm]{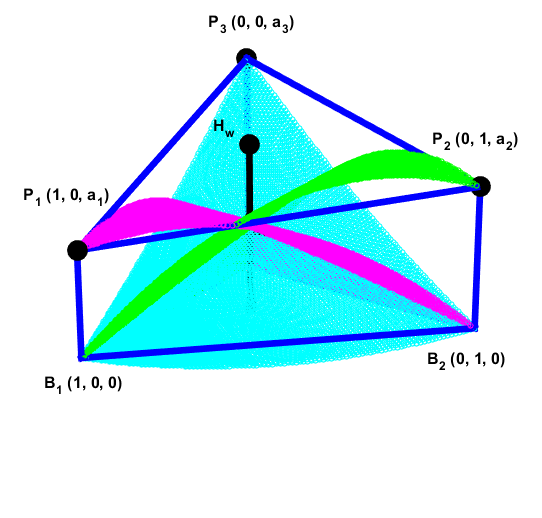}
\includegraphics[width=7cm]{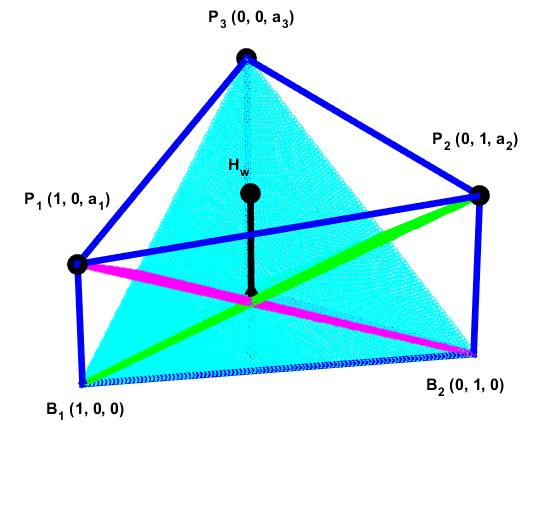}}
\centerline{\includegraphics[width=7cm]{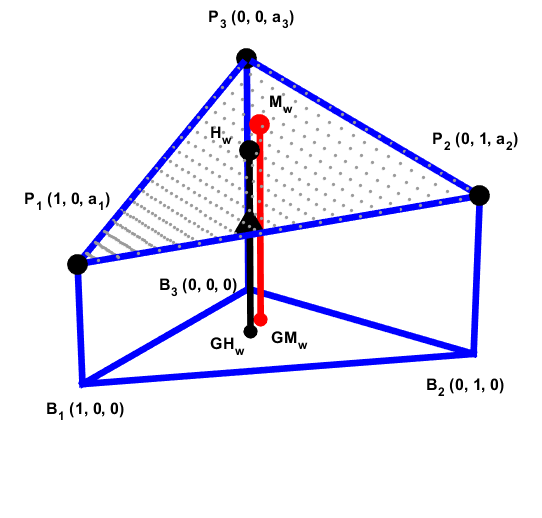}
\includegraphics[width=7cm]{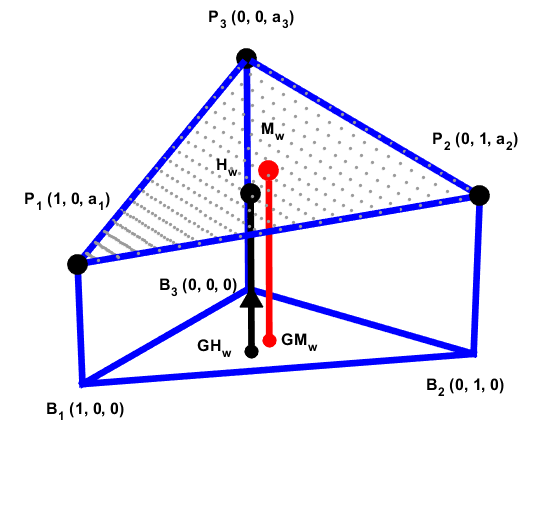}}
\caption{Representation of weighted harmonic mean of three positive values $a_1=3,$ $a_2=4,$ $a_3=6$ as the height of the prism through the intersection point of the three paraboloids considered in Theorem \ref{teo:3Dcase3}. Comparison with the weighted arithmetic mean.  for the representation of the harmonic mean of the values.
Weights $w_1=0.2,$ $w_2=0.2,$ $w_3=0.6$ to the left and $w_1=w_2=w_3=\frac{1}{3}$ to the right. Upper: Intersection of the three paraboloids.
Bottom: Comparison between the weighted harmonic mean and the weighted arithmetic mean.
} \label{fig2:3D}
\end{figure}

\section{Results on the weighted harmonic mean of $n$ values} \label{sec4}

First, we introduce the definition of weighted harmonic mean $H_w$ that we are going to be using.

\begin{definition}\label{Mn}
Given $a_i>0, i=1,\ldots,n$  $n$ positive real numbers and the weights $w_i>0, i=1,\ldots,n$ with $\sum_{i=1}^{n}w_i=1,$ the weighted harmonic mean is defined by
\begin{equation*}\label{eqnHw:MediaArmonica}
	H_{w}(a_1,\ldots,a_n) = \dfrac{1}{\sum\limits_{i=1}^{n}\dfrac{w_{i}}{a_{i}}}= \dfrac{\prod\limits_{k=1}^{n}a_{k}}{\sum\limits_{i=1}^{n}w_{i}\prod\limits_{\substack{k=1 \\k \neq i}}^{n}{a_{k}}},
\end{equation*}
and the weighted arithmetic mean is defined by
\begin{equation*}
M_w(a_1,\ldots,a_n)=\sum_{i=1}^n w_i a_i.
\end{equation*}
\end{definition}

We now give the main two results which are crucial in applications in numerical analysis, such as we will show in the section devoted to practical cases.
The first lemma has to do with the property of boundedness of the mean by the minimum of its arguments and it is used to define adaptative methods.
\begin{lemma} \label{minimon}
Let $a_i>0, i=1,\ldots,n$ be  $n$ positive real numbers and $w_i>0, i=1,\ldots,n$ the correspon\-ding weights with $\sum_{i=1}^{n}w_i=1.$  Then, the weighted harmonic mean $H_w$ is bounded as follows
\begin{equation*}
	H_{w} < \dfrac{a_{i_{0}}}{w_{i_{0}}} \leq \dfrac{a_{i}}{w_{i}}, \quad i=1,\ldots, n,
\end{equation*}
where $\dfrac{a_{i_{0}}}{w_{i_{0}}} = \min \{ \dfrac{a_{1}}{w_{1}}, \cdots, \dfrac{a_{n}}{w_{n}} \}.$
\end{lemma}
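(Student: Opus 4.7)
The plan is to argue directly from the first form of the definition, namely
$H_w = \bigl(\sum_{i=1}^n w_i/a_i\bigr)^{-1}$, and compare reciprocals. The second inequality in the statement, $a_{i_0}/w_{i_0}\le a_i/w_i$ for all $i$, is nothing but the definition of $i_0$ as the index minimizing the ratio, so there is nothing to prove there. Everything reduces to establishing the strict inequality $H_w < a_{i_0}/w_{i_0}$.

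For that, I would take reciprocals: since both sides are positive, the claim is equivalent to
\begin{equation*}
\sum_{i=1}^{n}\frac{w_i}{a_i} \;>\; \frac{w_{i_0}}{a_{i_0}}.
\end{equation*}
Split off the $i_0$-th summand and note that the remaining sum
$\sum_{i\neq i_0} w_i/a_i$ consists of $n-1\ge 1$ terms, each strictly positive because $w_i>0$ and $a_i>0$ by hypothesis. Hence this remainder is strictly positive and the inequality is strict, as required. Taking reciprocals once more yields $H_w < a_{i_0}/w_{i_0}$, and combining with the trivial chain of inequalities for the minimum completes the proof.

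There is essentially no obstacle here; the only point that needs a brief word is the strictness of the inequality, which relies on two of the standing assumptions of the lemma: $n\ge 2$ (so that the sum has at least one term beyond $w_{i_0}/a_{i_0}$) and all the weights being strictly positive (so that each of those extra summands is strictly positive, rather than merely nonnegative). Both conditions are part of the hypotheses, so the argument is a handful of lines and does not require any machinery beyond Definition \ref{Mn}.
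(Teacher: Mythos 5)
Your argument is correct and is essentially the paper's proof in reciprocal form: the paper factors $a_{i_0}/w_{i_0}$ out of the product expression for $H_w$ and observes that the resulting denominator strictly exceeds the numerator, which is exactly your observation that $\sum_{i} w_i/a_i$ strictly exceeds its $i_0$-th summand. Your remark that strictness needs $n\ge 2$ and $w_i>0$ is a fair point that the paper leaves implicit.
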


\begin{proof}

\begin{equation*}
	H_{w}= \dfrac{\prod\limits_{k=1}^{n}a_{k}}{\sum\limits_{j=1}^{n}{w_{j}\prod\limits_{\substack{k=1 \\k \neq j}}^{n}{a_{k}}}} = \dfrac{a_{i_0}}{w_{i_0}} \, \dfrac{\prod\limits_{\substack{k=1 \\k \neq i_0}}^{n}{a_{k}}}{\sum\limits_{j=1}^{n}\dfrac{w_{j}}{w_{i_0}}\prod\limits_{\substack{k=1 \\k \neq j}}^{n}{a_{k}}} < \dfrac{a_{i_{0}}}{w_{i_{0}}} \leq \dfrac{a_{i}}{w_{i}}, \quad i=1,\ldots, n.
\end{equation*}
\end{proof}

The second lemma deals with how close remains the weighted harmonic mean to the weighted arithmetic mean when the arguments are also close among them. This property is essential to define nonlinear methods which
preserve the order of approximation of their linear counterparts from which they are derived. We will also show this relation in the section dedicated to the practical examples.
\begin{lemma} \label{ordenn}
Let $a_i>0, i=1,\ldots,n$ be  $n$ positive real numbers and $w_i>0, i=1,\ldots,n$ the corres\-ponding weights with $\sum_{i=1}^{n}w_i=1.$
If $\;a_{i} = O(1), \quad \forall i=1, \cdots, n,\quad$ and $\quad\left|a_{1}-a_{i}\right| = O(h), \quad \forall i=2, \cdots, n, \quad$ then, the weighted harmonic mean $H_w$ and the weighted arithmetic
mean $M_w:=\sum_{i=1}^{n}w_ia_i$ satisfy
\begin{equation*}
\left|M_{w} - H_{w} \right|=O(h^{2}).
\end{equation*}
\end{lemma}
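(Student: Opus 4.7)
The plan is to put the two means over a common denominator and derive a closed-form expression for their difference that generalises \eqref{eq:distMVnu} and \eqref{eq:distMVnu3}. Writing
\[
D := \sum_{i=1}^{n} w_i \prod_{\substack{k=1 \\ k \neq i}}^{n} a_k,
\]
so that $H_w = \bigl(\prod_k a_k\bigr)/D$, the quantity to control is
\[
M_w - H_w \;=\; \frac{M_w\, D - \prod_{k=1}^n a_k}{D},
\]
and the whole task reduces to simplifying the numerator and then bounding numerator and denominator separately.

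The central step I would carry out is the algebraic identity
\[
M_w\, D \;-\; \prod_{k=1}^{n} a_k \;=\; \sum_{1 \le i < j \le n} w_i w_j\, (a_i - a_j)^2 \prod_{\substack{k=1 \\ k \neq i,j}}^{n} a_k.
\]
To prove it, expand $M_w D = \sum_{i,j} w_i w_j\, a_i \prod_{k\neq j} a_k$ and split into diagonal and off-diagonal parts. The diagonal $i = j$ yields $\bigl(\sum_i w_i^2\bigr)\prod_k a_k$; for $i \neq j$ the factor $a_i$ already sits inside $\prod_{k \neq j} a_k$, so $a_i \prod_{k\neq j} a_k = a_i^2 \prod_{k\neq i,j} a_k$. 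The normalisation $\sum_i w_i = 1$ gives the key cancellation $\sum_i w_i^2 - 1 = -2\sum_{i<j} w_i w_j$, which combines the subtracted $\prod_k a_k = a_i a_j \prod_{k\neq i,j} a_k$ with the symmetric pairing of off-diagonal $(i,j)$ and $(j,i)$ terms to produce exactly $w_i w_j (a_i-a_j)^2 \prod_{k\neq i,j} a_k$.

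With the identity in hand, the conclusion is routine. Under the hypotheses, the triangle inequality gives $|a_i - a_j| \le |a_i - a_1| + |a_1 - a_j| = O(h)$ for all $i,j$, so each squared difference is $O(h^2)$; since every $a_k = O(1)$, each product $\prod_{k\neq i,j} a_k$ is $O(1)$, and the numerator is $O(h^2)$. The denominator $D$ is bounded below by a positive constant because the $a_k$ are strictly positive and of order one, yielding $|M_w - H_w| = O(h^2)$.

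The main obstacle is the combinatorial bookkeeping in the middle step: one must correctly identify that the off-diagonal contributions carry $a_i^2$ (not $a_i a_j$) and then use $\bigl(\sum_i w_i\bigr)^2 = 1$ at the precise moment needed so that the pairing $(i,j) \leftrightarrow (j,i)$ assembles the perfect squares $(a_i - a_j)^2$. Once this identity is established, the bounds follow immediately and the lemma reduces the $n$-variable statement to exactly the same pattern already seen for $n=2$ and $n=3$.
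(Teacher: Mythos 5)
Your proposal is correct and follows essentially the same route as the paper's proof: put $M_w-H_w$ over the common denominator $D=\sum_j w_j\prod_{k\neq j}a_k$, establish the identity $M_wD-\prod_k a_k=\sum_{i<j}w_iw_j(a_i-a_j)^2\prod_{k\neq i,j}a_k$ by pairing the $(i,j)$ and $(j,i)$ off-diagonal terms (the paper's equations for $i_0,j_0$), and finish with the triangle inequality $|a_i-a_j|\le|a_i-a_1|+|a_1-a_j|=O(h)$. Your explicit use of $\bigl(\sum_i w_i\bigr)^2=1$ to absorb the subtracted product is the same cancellation the paper performs, just stated more transparently.
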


\begin{proof}

Using the expressions of $H_w$ and $M_w$ we have
\begin{equation} \label{MwHw}
|M_w - H_w|=	\left| \sum\limits_{i=1}^{n}w_{i}a_{i} - \dfrac{\prod\limits_{k=1}^{n}a_{k}}{\sum\limits_{j=1}^{n}w_{j}\prod\limits_{\substack{k=1 \\k \neq j}}^{n}{a_{k}}} \right|   =    \left|\dfrac{\sum\limits_{i=1}^{n} w_{i} a_{i}\sum\limits_{j=1}^{n}w_{j}\prod\limits_{\substack{k=1 \\k \neq j}}^{n}{a_{k}}- \prod\limits_{k=1}^{n}a_{k}}{\sum\limits_{j=1}^{n}w_{j}\prod\limits_{\substack{k=1 \\k \neq j}}^{n}{a_{k}}}\right|.
\end{equation}
Now, paying attention to the fact that given two indices $\;i_{0}, j_{0}\;$ such that $\;1 \leq i_{0}< j_{0} \leq n\;$ we have
\begin{equation} \label{i0j0}
	w_{i_{0}}a_{i_{0}}w_{j_{0}}\prod\limits_{\substack{k=1 \\k \neq j_{0}}}^{n}{a_{k}} =  w_{i_{0}}a_{i_{0}}^2 w_{j_{0}}\prod\limits_{\substack{k=1 \\k \neq i_{0},j_{0}}}^{n}{a_{k}},
\end{equation}
\begin{equation} \label{j0i0}
	w_{j_{0}}a_{j_{0}}w_{i_{0}}\prod\limits_{\substack{k=1 \\k \neq i_{0}}}^{n}{a_{k}} =  w_{i_{0}}a_{j_{0}}^2 w_{j_{0}}\prod\limits_{\substack{k=1 \\k \neq i_{0},j_{0}}}^{n}{a_{k}},
\end{equation}
and just by summing up both terms in (\ref{i0j0}) and (\ref{j0i0}) we get
\begin{equation} \label{i0j0&j0i0}
w_{i_{0}}a_{i_{0}}w_{j_{0}}\prod\limits_{\substack{k=1 \\k \neq j_{0}}}^{n}{a_{k}}+w_{j_{0}}a_{j_{0}}w_{i_{0}}\prod\limits_{\substack{k=1 \\k \neq i_{0}}}^{n}{a_{k}}
= w_{i_{0}}w_{j_{0}}\prod\limits_{\substack{k=1 \\k \neq i_{0}, j_{0}}}^{n}{a_{k}}(a_{i_{0}}^2 + a_{j_{0}}^2).
\end{equation}
For the case $i_0=j_0$ we get
\begin{equation}\label{i0i0}
w_{i_{0}}a_{i_{0}}w_{j_{0}}\prod\limits_{\substack{k=1 \\k \neq j_{0}}}^{n}{a_{k}} =  w_{i_{0}}^2 \prod\limits_{k=1}^{n}{a_{k}}.
\end{equation}
Using the simplifications in (\ref{i0j0&j0i0}) and (\ref{i0i0}) we can rewrite (\ref{MwHw}) as
\begin{equation*}
|M_w - H_w|	=   \left|\dfrac{\sum\limits_{i=1}^{n}{w_{i}}^2\prod\limits_{k=1}^{n}a_{k}+\sum\limits_{\substack{i,j=1 \\i < j}}^{n}w_{i} w_{j} (a_{i}^2+a_{j}^2) \prod\limits_{\substack{k=1 \\k \neq i,j}}^{n}{a_{k}}}{\sum\limits_{j=1}^{n}w_{j}\prod\limits_{\substack{k=1 \\k \neq j}}^{n}{a_{k}}}\right|    =    \left|\dfrac{\sum\limits_{\substack{i,j=1 \\i < j}}^{n}w_{i} w_{j} (a_{i}-a_{j})^2 \prod\limits_{\substack{k=1 \\k \neq i,j}}^{n}{a_{k}}}{\sum\limits_{j=1}^{n}w_{j}\prod\limits_{\substack{k=1 \\k \neq j}}^{n}{a_{k}}}\right|  =   O(h^{2}),
\end{equation*}
since by the triangular inequality we have that $\; \left|a_{i} - a_{j}\right| \leq \left|a_{i} - a_{1}\right| + \left|a_{1} - a_{j}\right| = O(h).$
\end{proof}

We introduce the following notation for the vertices of a prism in $\mathbb{R}^n.$
\begin{equation*}\label{eqnAH:Prisma}
	\begin{cases}
			B_{i} \equiv (0, \cdots, 0,\underset{i}{1},0,\cdots,0) \quad i=1,\cdots,n-1,\\
			B_{n}= (0, \cdots, 0),\\
			P_{i} \equiv (0, \cdots, 0,\underset{i}{1},0,\cdots,0, a_{i}) \quad i=1,\cdots,n-1,\\
			P_{n}= (0, \cdots, 0, a_{n}),
	\end{cases}
\end{equation*}
where the points $B_i$ represent the vertices which lay on the base of the prism and the vertices $P_i$ are nothing more than the points located at the maximum height of the prism at the
corresponding points $B_i$ in the base and in the parallel direction to the $x_n$ axis. \\

We are now ready to give the following two theorems for the weighted harmonic mean, which generalize the geometrical representations using prisms.

\begin{theorem} \label{teo:NDcase1}
Let us consider the hyperplane $\Pi$ which passes through the points $P_i, i=1,\ldots,n,$ $n\geq 2,$ given by the equation
\begin{equation}\label{eq:Plano_Pi_1}
	\Pi\equiv \; x_n=a_{n} + \sum_{i=1}^{n}{x_{i}(a_{i}-a_{n})}.
\end{equation}
Let us also consider the hyperplane $V_{n}$ which passes through the points $B_i, i=1,\ldots,n-1$ and $P_n$ given by the equation
\begin{equation} \label{eqPlane1-ND}
V_n \equiv  \; \sum_{i=1}^{n-1}{x_{i}} + \dfrac{x_{n}}{a_{n}}=1.
\end{equation}
and the paraboloids $V_i, i=1,\ldots,n-1$ given by the equations
\begin{eqnarray} \label{eqParab1-ND}
V_i &\equiv&	x_{n}=b_{i}{x_{i}}^{2} + (a_{i} - b_{i})x_{i},
\end{eqnarray}
which pass through $B_1,\ldots,B_{i-1},P_i,B_{i+1},\ldots, B_n$ respectively, where the coefficients $b_i$ are given by
\begin{equation}\label{eqnHwT3:Coef_bi_xH3D}
	b_{i}=\dfrac{H_{w}}{\bar{x}_{i}(\bar{x}_{i}-1)} (w_{n}-w_{i}), \qquad i=1,\ldots,n-1.
\end{equation}\\
Then, the system of equations formed by (\ref{eqPlane1-ND}) and (\ref{eqParab1-ND}) has a unique solution $(\bar{x}_{1},\ldots,\bar{x}_{n})$ given by
\begin{equation}\label{eqnHwT3:Punto_xH3D}
\bar{x}_{i}=w_{i} \dfrac{H_{w}}{a_{i}}, \quad i=1,\ldots,n-1, \quad	\bar{x}_{n}=w_{n}H_{w}.
\end{equation}
Moreover, the following two affirmations are true:
\begin{itemize}
\item[a)] The height of the prism through the point $(\bar{x}_1,\ldots,\bar{x}_{n-1},0)$ coincides with the weighted harmonic mean $H_w$ of $a_i,i=1,\ldots,n,$ that is, the point
 $(\bar{x}_1,\ldots,\bar{x}_{n-1},H_w)$ belongs to the hyperplane $\Pi.$
\item[b)] The height of the prism through the barycenter of the
base $GM_w=\sum\limits_{i=1}^{n}w_iB_i$ coincides with the weighted arithmetic mean.
\end{itemize}
\end{theorem}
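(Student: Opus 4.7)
My plan is to verify the theorem in four stages: (i) each paraboloid $V_i$ and the hyperplanes $V_n$, $\Pi$ pass through the claimed vertices; (ii) the candidate point (\ref{eqnHwT3:Punto_xH3D}) satisfies the full system; (iii) this solution is unique; (iv) the heights in (a) and (b) equal $H_w$ and $M_w$ respectively.

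For (i) I would check by direct substitution: setting $x_j=0$ for all $j\neq i$ in (\ref{eqParab1-ND}) makes the right-hand side vanish, placing every $B_j$ with $j\neq i$ on $V_i$; the choice $x_i=1$ gives $x_n=b_i+(a_i-b_i)=a_i$, so $P_i\in V_i$. The hyperplanes $V_n$ and $\Pi$ are handled analogously. For (ii), the key identity is
\begin{equation*}
b_i\bar{x}_i^2+(a_i-b_i)\bar{x}_i = b_i\bar{x}_i(\bar{x}_i-1)+a_i\bar{x}_i,
\end{equation*}
and using the definition (\ref{eqnHwT3:Coef_bi_xH3D}) of $b_i$ together with $\bar{x}_i=w_iH_w/a_i$, the right-hand side collapses to $H_w(w_n-w_i)+w_iH_w=w_nH_w=\bar{x}_n$. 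Substituting the candidate into $V_n$ reduces its left-hand side to $H_w\sum_{i=1}^n w_i/a_i$, which equals $1$ by Definition~\ref{Mn}.

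The main obstacle will be step (iii). Each paraboloid $V_i$ expresses $x_n$ as a quadratic in the single variable $x_i$, so any common intersection must satisfy the $n-1$ decoupled quadratics $b_ix_i^2+(a_i-b_i)x_i=x_n$ simultaneously with the linear equation (\ref{eqPlane1-ND}). Once $x_n$ is fixed each quadratic has at most two real roots in $x_i$; the geometric constraint that the intersection lie in the interior of the prism (equivalently $0<x_i<1$) selects the root in (\ref{eqnHwT3:Punto_xH3D}), and (\ref{eqPlane1-ND}) then forces $x_n=w_nH_w$. To make this rigorous I would analyse the signs of $b_i$ and $a_i-b_i$, which depend on the order of $w_n$ relative to $w_i$, and show that the second quadratic root falls outside $(0,1)$ and hence is geometrically spurious.

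Finally, for (iv), plugging $(\bar{x}_1,\ldots,\bar{x}_{n-1},H_w)$ into (\ref{eq:Plano_Pi_1}) and using $\sum_{i=1}^n w_i/a_i=1/H_w$ after a short simplification gives $x_n=H_w$, proving (a). For (b), the barycenter is $GM_w=(w_1,\ldots,w_{n-1},0)$ since $B_n$ is the origin, and a direct computation yields $a_n+\sum_{i=1}^{n-1}w_i(a_i-a_n)=a_nw_n+\sum_{i=1}^{n-1}w_ia_i=M_w$, establishing the identification with the weighted arithmetic mean.
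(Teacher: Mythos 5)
Your stages (i), (ii) and (iv) are correct and essentially coincide with the paper's treatment: the paper dismisses the verification of the candidate solution as ``immediate to check'' (your identity $b_i\bar{x}_i^2+(a_i-b_i)\bar{x}_i=b_i\bar{x}_i(\bar{x}_i-1)+a_i\bar{x}_i=H_w(w_n-w_i)+w_iH_w=\bar{x}_n$ is exactly the computation it omits), and it proves a) and b) precisely as you do, by intersecting the vertical line through $(\bar{x}_1,\ldots,\bar{x}_{n-1},0)$, respectively through $GM_w=(w_1,\ldots,w_{n-1},0)$, with $\Pi$. Where you genuinely depart from the paper is stage (iii): the paper argues uniqueness by setting $z=\bar{x}-x'$ for a hypothetical second solution, treating the resulting relations as a homogeneous system, parametrizing its solution set as $z=\lambda v$ and deriving $(\lambda_1-\lambda_2)b_iv_i^2=0$ together with the existence of an index with $b_{i_0}v_{i_0}\neq 0$. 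Your route --- decouple the $n-1$ quadratics at fixed $x_n$ and count roots in $(0,1)$ --- is a different and, in principle, more transparent strategy.

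There is, however, a real gap in stage (iii), and it is not merely that the argument is announced rather than executed. By restricting to $0<x_i<1$ you are proving a different statement from the literal one, which asserts uniqueness of the solution of the algebraic system with no mention of the prism; and the literal statement is false. Take $n=2$, $a_1=14$, $a_2=10$, $w_1=0.7$, $w_2=0.3$: then $H_w=12.5$, $\bar{x}_1=0.625$, $b_1=\frac{64}{3}$, and eliminating $x_2$ through $V_2$ yields $32x_1^2+4x_1-15=0$, with roots $x_1=\frac{5}{8}$ and $x_1=-\frac{3}{4}$; the point $(-\frac{3}{4},\,17.5)$ solves both equations and lies outside the trapezoid. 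So the restriction to the prism is not a convenience that ``selects'' the right root --- it is the only hypothesis under which uniqueness can hold, and it must be stated explicitly in whatever you end up proving. Moreover, even inside the prism your sketch is incomplete on two counts. First, a sign analysis of $b_i$ and $a_i-b_i$ alone will not do, because the vertex of $q_i(x)=b_ix^2+(a_i-b_i)x$ may lie in $(0,1)$; the clean argument is that for fixed $t\in(0,a_i)$ the quadratic $q_i(x)-t$ is negative at $x=0$ and positive at $x=1$, and a quadratic with opposite signs at the endpoints of an interval has exactly one root there, which you may call $\varphi_i(t)$. Second, excluding the spurious root of each individual quadratic does not yet determine $x_n$: you still need that $t\mapsto\sum_{i=1}^{n-1}\varphi_i(t)+t/a_n$ takes the value $1$ at most once, which follows because $q_i$ crosses the level $t$ from below at $\varphi_i(t)$, so $q_i'(\varphi_i(t))>0$ and each $\varphi_i$ is strictly increasing. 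Until you supply both pieces (and handle the range of $t$ outside $(0,\min_i a_i)$, where the endpoint-sign argument is unavailable), the uniqueness claim remains unproven.
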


\begin{proof}

It is immediate to check that the proposed solution satisfies (\ref{eqPlane1-ND}) and (\ref{eqParab1-ND}).
Let us prove that the solution is unique. By reductio ad  absurdum, let us suppose that there exists another solution
$x^{\prime}=(x^{\prime}_{1}, \cdots, x^{\prime}_{n})$ with $x^{\prime}\neq \bar{x}.$ Then, denoting $z_{i}= \bar{x}_{i}-x^{\prime}_{i}, i=1,\cdots,n,$ the system of equations formed by
(\ref{eqPlane1-ND}) and (\ref{eqParab1-ND}) can be easily transformed into	
\begin{subequations}\label{eqnHwT1:T1_z}
	\begin{align}
			\sum\limits_{i=1}^{n-1}{z_{i}} + \dfrac{z_{n}}{a_{n}} &=0, \label{eqnHwT1:T1_z_1} \\
			z_{n}-b_{i}z_{i}(\bar{x}_{i}+x^{\prime}_{i}) - (a_{i} - b_{i})z_{i} &= 0,  \quad i=1,\cdots,n-1, \label{eqnHwT1:T1_z_2}
	\end{align}
\end{subequations}
what amounts to a homogeneous linear system of $n$ equations with $n$ unknowns. If we show that this system has only the trivial solution $z=0,$ then we would have proven that
$\bar{x}=x^{\prime},$ what is a contradiction with the starting supposition. Therefore, $\bar{x}$ would be the unique solution. Let us then prove that system (\ref{eqnHwT1:T1_z}) has
$z=0$ as the unique solution. Again by reductio ad absurdum, let us suppose that the system has infinite solutions, that is,
$z= \sum\limits_{k=1}^{s}{\lambda_{k}v^{k}}$, where $s=n-r,$ being $r$ the rank of the coefficient matrix of the linear system, $\lambda_{k} \in \mathbb{R},$ and $v^k, \ k=1,\ldots,s,$ represent
a base of the kernel of the associated linear map. Let us consider the univariate set of solutions $z=\lambda_1 v^1,$ $\lambda_1 \in \mathbb{R}.$ By the sake of simplicity, we will drop the superindex and we will
write $z=\lambda v.$ Thus, we obtain
\begin{equation*}
x^{\prime}=\bar{x}-z= \bar{x}- \lambda v,
\end{equation*}
whose coordinates are given by\\
\begin{equation} \label{xprima1}
x^{\prime}_{i}=\bar{x}_{i}-z_{i}= \bar{x}_{i}-\lambda v_{i}.
\end{equation}
Plugging (\ref{xprima1}) into (\ref{eqnHwT1:T1_z_2}) we get\\
\begin{equation} \label{eq:lambda1}
\lambda v_{n}=b_{i} \lambda v_{i}(2 \bar{x}_{i}-\lambda v_{i}) + (a_{i} - b_{i})\lambda v_{i}, \quad \forall i=1,\cdots,n-1,
\end{equation}
and simplifying expression (\ref{eq:lambda1}) we obtain
\begin{equation}\label{eq:lambda2}
	-\lambda b_{i} v_{i}^{2} + 2 b_{i} v_{i} \bar{x}_{i}+(a_{i} - b_{i})v_{i} - v_{n}=0, \quad \forall i=1,\cdots,n-1, \quad \lambda \neq 0.
\end{equation}
Now, particularizing expression (\ref{eq:lambda2}) for two different values of $\lambda,$  $\lambda_{1} \neq \lambda_{2},$ and subtracting both expressions, we reach to
\begin{equation}\label{eq:lambda3}
	(\lambda_{1} - \lambda_{2}) b_{i} v_{i}^{2} = 0, \quad \forall i=1, \cdots, n-1.
\end{equation}
We are going to prove now that there exists $i_0 \in \left\{1, \cdots, n-1 \right\}$ such that $ b_{i_0} \neq 0 \; \text{and} \; v_{i_0} \neq 0,$ and therefore, from (\ref{eq:lambda3}), this would imply that
$\; \lambda_{1} = \lambda_{2}$ what is a contradiction. Thus, $z=0$ would be the unique solution of the homogeneous linear system and $\bar{x}$ would be the unique solution of
 the system given by (\ref{eqPlane1-ND}) and (\ref{eqParab1-ND}).

Since $v\neq 0,$ $ \exists v_i \neq 0 \ \text{for some} \ i \in \left\{1, \cdots, n-1 \right\}.$ Otherwise, if $v_i = 0, \ \forall i \in \left\{1, \cdots, n-1 \right\}$,
from (\ref{eqnHwT1:T1_z_1}) we get $v_{n}=-a_{n}\sum\limits_{i=1}^{n-1}{v_{i}} = 0\;$ and $\;v=0,$ what is not possible.
Let us denote $I$ the set of indices for which $v_{i}\neq0$. If we suppose that $b_{i}=0, \ \forall i \in I,$ then from (\ref{eq:lambda2}) we get
$\;a_{i}v_{i}-v_{n}=0.$ Thus,  $v_{i}= \dfrac{v_{n}}{a_{i}}, \ \forall i \in I.$ Also, from (\ref{eqnHwT1:T1_z_1})
\begin{equation} \label{eq:lambda4}
\sum\limits_{i\in I}{\lambda v_{i}} + \lambda \dfrac{v_{n}}{a_{n}}=0.
\end{equation}
Now, using in (\ref{eq:lambda4}) the fact that $v_{i}= \dfrac{v_{n}}{a_{i}}, \ \forall i \in I,$ we get $v_n=0,$ and in turn, $v=0,$ what gives a contradiction which comes from the supposition
$b_{i}=0, \ \forall i \in I.$ Therefore, $\exists i_0 \in I,$ such that $b_{i_0}\neq 0.$   \\

In order to prove now point a) of the theorem, we consider the straight line parallel to the $x_{n}$ axis passing through  $(\bar{x}_1,\ldots,\bar{x}_{n-1},0)$, that is
\begin{equation}\label{eqnAw:Recta_r}
	r_{w}\equiv
	\begin{cases}
			x_{1}= \bar{x}_{1}, \\
			\vdots \\
			x_{n-1}=\bar{x}_{n-1}.
	\end{cases}
\end{equation}\\
Cutting this straight line with the hyperplane $\Pi$ we get the point $(\bar{x}_1,\ldots,\bar{x}_{n-1},H_w),$ which gives the enunciated result.
A similar argument proves point b), just by considering in this case the straight line parallel to the $x_n$ axis passing through the barycenter $GM_w=\sum\limits_{i=1}^{n}w_iB_i=(w_1,\ldots,w_{n-1},0),$
and verifying that its intersection point with the hyperplane $\Pi$ is just the weighted arithmetic mean $M_w=\sum\limits_{i=1}^{n} w_i a_i.$
\end{proof}

\begin{theorem} \label{teo:NDcase3}
Let us consider the hyperplane $\Pi$ which passes through the points $P_i, i=1,\ldots,n,$ $n\geq 2,$ given by the equation
\begin{equation}\label{eq:Plano_Pi_3}
	\Pi\equiv \; x_n=a_{n} + \sum_{i=1}^{n}{x_{i}(a_{i}-a_{n})}.
\end{equation}
Let us also consider the paraboloid given by $V_{n}$ which passes through the points $B_i, i=1,\ldots,n-1$ and $P_n$ given by the equation
\begin{equation} \label{eqParab3-1-ND}
V_n \equiv  \; x_n=a_n+\sum_{i=1}^{n-1}(c_i x_{i}^2 -(c_i+a_n)x_i),
\end{equation}
where the coefficients $c_i$ are given by
\begin{eqnarray} \label{eqParab3-1-coef-ND}
c_{i}=\dfrac{\frac{H_{w}}{n}+(\sum\limits_{j=1}^{n-1}{\bar{x}_j}-1)a_n}{(n-1)\bar{x}_{i}(\bar{x}_{i}-1)}, \qquad i=1,\ldots,n-1,
\end{eqnarray}
and the paraboloids $V_i, i=1,\ldots,n-1$ given by the equations
\begin{eqnarray} \label{eqParab3-ND}
V_i &\equiv&	x_{n}=b_{i}{x_{i}}^{2} + (a_{i} - b_{i})x_{i},
\end{eqnarray}
which pass through $B_1,\ldots,B_{i-1},P_i,B_{i+1},\ldots, B_n$ respectively, where the coefficients $b_i$ are given by
\begin{equation}\label{eqParab3-coef-ND}
	b_{i}=\dfrac{H_{w}}{\bar{x}_{i}(\bar{x}_{i}-1)} (\frac{1}{n}-w_{i}), \qquad i=1,\ldots,n-1.
\end{equation}\\
Then, the system of equations formed by (\ref{eqParab3-1-ND}) and (\ref{eqParab3-ND}) has a unique solution $(\bar{x}_{1},\ldots,\bar{x}_{n})$ given by
\begin{equation}\label{eqnHwT4:Punto_xH3D}
\bar{x}_{i}=w_{i} \dfrac{H_{w}}{a_{i}}, \quad i=1,\ldots,n-1, \quad	\bar{x}_{n}=\frac{H_{w}}{n}.
\end{equation}
Moreover, the following two affirmations are true:
\begin{itemize}
\item[a)] The height of the prism through the point $(\bar{x}_1,\ldots,\bar{x}_{n-1},0)$ coincides with the weighted harmonic mean $H_w$ of $a_i,i=1,\ldots,n,$ that is, the point
 $(\bar{x}_1,\ldots,\bar{x}_{n-1},H_w)$ belongs to the hyperplane $\Pi.$
\item[b)] The height of the prism through the barycenter of the
base $GM_w=\sum\limits_{i=1}^{n}w_iB_i$ coincides with the weighted arithmetic mean.
\end{itemize}
\end{theorem}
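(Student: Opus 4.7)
The plan is to follow the three-part structure of the proof of Theorem \ref{teo:NDcase1}, the novelty being that $V_n$ in (\ref{eqParab3-1-ND}) is now a paraboloid rather than a hyperplane. First I would verify by direct substitution that (\ref{eqnHwT4:Punto_xH3D}) satisfies every equation in the system; then I would prove uniqueness by a reductio ad absurdum adapted from the previous theorem; finally I would deduce the geometric statements (a) and (b) by intersecting suitable vertical lines with the hyperplane $\Pi$.

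For the verification, substituting $\bar{x}_{i} = w_{i}H_{w}/a_{i}$ into $V_{i}$ gives $\bar{x}_{n} = b_{i}\bar{x}_{i}(\bar{x}_{i}-1) + a_{i}\bar{x}_{i}$, which upon using (\ref{eqParab3-coef-ND}) collapses to $H_{w}(1/n - w_{i}) + w_{i}H_{w} = H_{w}/n$. For the paraboloid $V_{n}$, a short computation exploiting $\sum_{j=1}^{n}w_{j}/a_{j} = 1/H_{w}$ yields
\begin{equation*}
\sum_{j=1}^{n-1}\bar{x}_{j}-1 \;=\; -\frac{w_{n}H_{w}}{a_{n}},
\end{equation*}
so that the numerator of (\ref{eqParab3-1-coef-ND}) simplifies to $H_{w}(1/n - w_{n})$ and $c_{i} = H_{w}(1/n - w_{n})/((n-1)\bar{x}_{i}(\bar{x}_{i}-1))$. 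Plugging $\bar{x}_{1},\ldots,\bar{x}_{n-1}$ into (\ref{eqParab3-1-ND}) then returns $w_{n}H_{w} + H_{w}(1/n - w_{n}) = H_{w}/n$, as required. (Note that $\bar{x}_{i}\in(0,1)$ by Lemma \ref{minimon}, so the denominators appearing in $b_{i}$ and $c_{i}$ are well defined.)

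The uniqueness step is the main obstacle, because $V_{n}$ is quadratic, so the system obtained by substituting $z_{i} = \bar{x}_{i} - x'_{i}$ is no longer linear. Mirroring Theorem \ref{teo:NDcase1}, I would assume a second solution, parametrize an alleged one-parameter family by $z = \lambda v$ with $v\neq 0$, and force the resulting identities to hold for two distinct parameters $\lambda_{1}\neq\lambda_{2}$. From the $V_{i}$ equations one recovers, exactly as before, $b_{i}v_{i}^{2}=0$ for every $i=1,\ldots,n-1$; the $V_{n}$ equation produces the analogous constraint $\sum_{i=1}^{n-1}c_{i}v_{i}^{2} = 0$. Setting $I = \{i : v_{i}\neq 0\}$, the first relation forces $b_{i}=0$, i.e.\ $w_{i}=1/n$, for every $i\in I$. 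Two sub-cases remain. If $w_{n}=1/n$, then the simplified formula above gives $c_{i}=0$ for all $i$ and $V_{n}$ degenerates into the hyperplane of Theorem \ref{teo:NDcase1}, so uniqueness is inherited from that result. If $w_{n}\neq 1/n$, then every $c_{i}$ is nonzero and all the $c_{i}$ share the sign of $1/n - w_{n}$ (because $\bar{x}_{i}(\bar{x}_{i}-1)<0$), hence $\sum_{i\in I}c_{i}v_{i}^{2}=0$ forces $v_{i}=0$ on $I$, a contradiction; the $V_{i}$ equations then give $v_{n}=0$, so $v=0$.

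Finally, assertions (a) and (b) follow verbatim from the corresponding part of Theorem \ref{teo:NDcase1}, since the hyperplane $\Pi$ is unchanged: intersecting the vertical line through $(\bar{x}_{1},\ldots,\bar{x}_{n-1},0)$ with $\Pi$ and using $\bar{x}_{i}=w_{i}H_{w}/a_{i}$ together with $\sum_{j}w_{j}=1$ returns $H_{w}$, while the same intersection at the barycenter $(w_{1},\ldots,w_{n-1},0)$ returns $\sum_{j=1}^{n}w_{j}a_{j}=M_{w}$.
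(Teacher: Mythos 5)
Your proposal reproduces the paper's overall strategy --- direct verification of (\ref{eqnHwT4:Punto_xH3D}), uniqueness via the reductio with $z=\lambda v$ and evaluation at two distinct values of $\lambda$, and parts (a) and (b) by cutting vertical lines with $\Pi$ --- but your uniqueness step is organised differently from the paper's, and more cleanly. The paper first proves the auxiliary claim that all the $c_i$ share a common sign (its statement s1), obtained by analysing the equation $c_i=0$), then supposes $b_i=0$ on the support $I$ of $v$, extracts $v_n=a_iv_i$ from the $V_i$ equations, and only afterwards applies the two-$\lambda$ device to the $V_n$ equation to reach $\sum_{i\in I}c_i/a_i^2=0$, whence $c_i=0$ and finally $v_n(1+\sum_{i\in I}a_n/a_i)=0$ give the contradiction. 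You instead apply the two-$\lambda$ device to the $V_n$ equation at once, obtaining $\sum_{i=1}^{n-1}c_iv_i^2=0$ alongside $b_iv_i^2=0$; your simplification of the numerator of $c_i$ to $H_w(\tfrac{1}{n}-w_n)$ renders the common-sign property of the $c_i$ immediate, and the dichotomy $w_n=\tfrac{1}{n}$ (all $c_i$ vanish and all $b_i$ coincide with those of Theorem \ref{teo:NDcase1}, so uniqueness is inherited from that result) versus $w_n\neq\tfrac{1}{n}$ (the one-signed sum $\sum_{i\in I}c_iv_i^2$ forces $I=\emptyset$, and then the $V_i$ equations give $v_n=0$) closes the argument without the paper's detour through $v_n=a_iv_i$. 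Your remark that $\bar{x}_i\in(0,1)$ by Lemma \ref{minimon}, so that the factors $\bar{x}_i(\bar{x}_i-1)$ are nonzero and negative, is genuinely needed both for the coefficients $b_i$, $c_i$ to be well defined and for the sign argument, and is left implicit in the paper. Note finally that you inherit, rather than repair, the paper's logical reliance on the putative second solution lying on a whole line of solutions through $\bar{x}$ (which is what licenses evaluating the substituted equations at two distinct values of $\lambda$); that weakness belongs to the published proof and is not introduced by your reorganisation.
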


\begin{proof}

It is trivial to see that the proposed solution satisfies (\ref{eqParab3-1-ND}) and (\ref{eqParab3-ND}).
Let us prove that the solution is unique. Let us suppose that there exists another solution
$x^{\prime}=(x^{\prime}_{1}, \cdots, x^{\prime}_{n})$ with $x^{\prime}\neq \bar{x}.$ Then, denoting $z_{i}= \bar{x}_{i}-x^{\prime}_{i}, i=1,\cdots,n,$ the system of equations formed by
(\ref{eqParab3-1-ND}) and (\ref{eqParab3-ND}) can be written as	
\begin{subequations}\label{eqnHwT1:T3_z}
	\begin{align}
			\sum\limits_{i=1}^{n-1}\left[{c_{i}z_{i}(\bar{x}_{i}+x^{\prime}_{i}) - (c_{i}+a_{n})z_{i}}\right] - z_{n} &= 0, \label{eqnHwT1:T3_z_1} \\
			z_{n}-b_{i}z_{i}(\bar{x}_{i}+x^{\prime}_{i}) - (a_{i} - b_{i})z_{i} &= 0,  \quad i=1,\cdots,n-1, \label{eqnHwT1:T3_z_2}
	\end{align}
\end{subequations}
what amounts to a homogeneous linear system of $n$ equations with $n$ unknowns. If we show that this system has only the trivial solution $z=0,$ then we would have proven that
$\bar{x}=x^{\prime},$ what is a contradiction with the starting supposition. Therefore, $\bar{x}$ would be the unique solution. Let us then prove that system (\ref{eqnHwT1:T3_z}) has
$z=0$ as the unique solution. By reductio ad absurdum, let us suppose that the system has infinite solutions, that is,
$z= \sum\limits_{k=1}^{s}{\lambda_{k}v^{k}}$, where $s=n-r,$ being $r$ the rank of the coefficient matrix of the linear system, $\lambda_{k} \in \mathbb{R},$ and $v^k, \ k=1,\ldots,s,$ represent
a base of the Kernel of the associated linear map. Let us consider the univariate set of solutions $z=\lambda_1 v^1,$ $\lambda_1 \in \mathbb{R}.$ By the sake of simplicity, we will drop the superindex and we will
write $z=\lambda v.$ Thus, we obtain
\begin{equation*}
x^{\prime}=\bar{x}-z= \bar{x}- \lambda v,
\end{equation*}
whose coordinates are given by\\
\begin{equation} \label{xprima}
x^{\prime}_{i}=\bar{x}_{i}-z_{i}= \bar{x}_{i}-\lambda v_{i}.
\end{equation}
Plugging (\ref{xprima}) into (\ref{eqnHwT1:T3_z_2}) we get\\
\begin{equation} \label{eq:lambda1-3}
\lambda v_{n}=b_{i} \lambda v_{i}(2 \bar{x}_{i}-\lambda v_{i}) + (a_{i} - b_{i})\lambda v_{i}, \quad \forall i=1,\cdots,n-1,
\end{equation}
and simplifying expression (\ref{eq:lambda1-3}) we obtain
\begin{equation}\label{eq:lambda2-3}
	-\lambda b_{i} v_{i}^{2}+2 b_{i} v_{i} \bar{x}_{i}+(a_{i} - b_{i})v_{i} - v_{n}=0, \quad \forall i=1,\cdots,n-1, \quad \lambda \neq 0.
\end{equation}
Now, particularizing expression (\ref{eq:lambda2-3}) for two different values of $\lambda,$  $\lambda_{1} \neq \lambda_{2},$ and subtracting both expressions, we reach to
\begin{equation}\label{eq:lambda3-3}
	(\lambda_{1} - \lambda_{2}) b_{i} v_{i}^{2} = 0, \quad \forall i=1, \cdots, n-1.
\end{equation}

Before continuing with the main proof, we need to prove the following statement
\begin{itemize}
\item[s1)] $sign(c_{i})=sign(c_j)$  $\forall i,j \ \in \{1,\ldots, n-1\},$
\end{itemize}
where $sign(.)$ denotes the sign function
\begin{equation*}
sign(x):=\left \{\begin{array}{ll}
1 & x>0,\\
-1 & x<0,\\
0 & x=0.
\end{array} \right.
\end{equation*}
Statement s1) is proven just by isolating the term $H_w$ in equation $c_{i}=0,$ that is
\begin{eqnarray}\label{eq:Hw}
c_{i}=0 &\Leftrightarrow& \frac{H_{w}}{n}=a_n(1-\sum_{j=1}^{n-1}\bar{x}_j)=a_n-H_w(a_n \sum_{j=1}^{n-1} \frac{w_j}{a_j})\\ \notag
&\Leftrightarrow& H_w=\dfrac{a_{n}}{\frac{1}{n}+a_n\sum\limits_{j=1}^{n-1}\frac{w_{j}}{a_j}}
=\dfrac{\prod\limits_{k=1}^{n}a_{k}}{\frac{1}{n}a_1\ldots a_{n-1}+\sum\limits_{j=1}^{n-1}w_{j}\prod\limits_{\substack{k=1 \\k \neq j}}^{n}{a_{k}}}.
\end{eqnarray}
Comparing expression (\ref{eq:Hw}) with the expression of $H_w$ in Definition \ref{Mn}, we get that $c_{i}=0  \Leftrightarrow w_n=\frac{1}{n},$
$c_{i}>0  \Leftrightarrow w_n<\frac{1}{n},$ and $c_{i}<0  \Leftrightarrow w_n>\frac{1}{n}.$ \\
We are ready to continue with the main proof.
Since $v\neq 0,$ the set of indices $I$ such that $v_i \neq 0, i \in I,$ is not empty. Let us suppose that $b_i =0, \forall i \in I.$
From (\ref{eqnHwT1:T3_z}), we get
\begin{eqnarray} \label{eq:s3-1}
v_n&=&a_{i}v_{i},\\
v_n&=&\sum_{i \in I}(-\lambda c_{i}v_{i}+2c_{i}\bar{x}_{i}-(a_n+c_{i}))v_{i}.\label{eq:s3-2}
\end{eqnarray}
Plugging (\ref{eq:s3-1}) into (\ref{eq:s3-2}) and using that $v_{i}\neq0,$ and in turn $v_n \neq 0,$ we get that
\begin{equation} \label{eq:s3-3}
\sum_{i \in I}(-\lambda c_{i}\frac{v_{i}}{a_i}+2c_{i}\frac{\bar{x}_{i}}{a_i}-\frac{a_n+c_{i}}{a_i})-1=0, \ \forall \lambda \in \mathbb{R}.
\end{equation}
From equation (\ref{eq:s3-3}), since it is true for all value of $\lambda,$ taking two different values $\bar{\lambda}_1, \bar{\lambda}_2$ we get that
\begin{eqnarray} \label{eq:s3-4}
-\bar{\lambda}_1v_n(\sum_{i \in I} \frac{c_{i}}{a_i^2})+\sum_{i \in I}(2c_{i}\frac{\bar{x}_{i}}{a_i}-\frac{a_n+c_{i}}{a_i})-1=0,\\
-\bar{\lambda}_2v_n(\sum_{i \in I} \frac{c_{i}}{a_i^2})+\sum_{i \in I}(2c_{i}\frac{\bar{x}_{i}}{a_i}-\frac{a_n+c_{i}}{a_i})-1=0, \label{eq:s3-5}
\end{eqnarray}
and subtracting (\ref{eq:s3-4}) and (\ref{eq:s3-5}) we get
\begin{equation} \label{eq:s3-6}
\sum_{i \in I} \frac{c_{i}}{a_i^2}=0.
\end{equation}
Taking into account statement s1), since all $c_i$ have the same sign, it must be $c_i=0, i \in I.$ In turn, by using (\ref{eq:s3-2}), this fact implies
\begin{equation*}
-\sum_{i \in I} \frac{a_nv_n}{a_i}=v_n \Rightarrow v_n(1+\sum_{i \in I} \frac{a_n}{a_i})=0,
\end{equation*}
what is not viable as $v_n\neq 0,$ and we get a contradiction. Therefore, $\exists i \in I,$ such that $b_i\neq 0.$
From (\ref{eq:lambda3-3}), this means that
$\; \lambda_{1} = \lambda_{2}$ what gives again a contradiction, this time with the initial supposition. Thus, $z=0$ is the unique solution of the homogeneous linear system and $\bar{x}$ is the unique solution of
 the system given by (\ref{eqParab3-1-ND}) and (\ref{eqParab3-ND}).\\

In order to prove now point a) of the theorem, we consider the straight line parallel to the $x_{n}$ axis passing through  $(\bar{x}_1,\ldots,\bar{x}_{n-1},0)$, that is
\begin{equation}\label{eqnAw:Recta_r-3}
	r_{w}\equiv
	\begin{cases}
			x_{1}= \bar{x}_{1}, \\
			\vdots \\
			x_{n-1}=\bar{x}_{n-1}.
	\end{cases}
\end{equation}\\
Cutting this straight line with the hyperplane $\Pi$ we get the point $(\bar{x}_1,\ldots,\bar{x}_{n-1},H_w),$ which gives the enunciated result.
A similar argument proves point b), just by considering in this case the straight line parallel to the $x_n$ axis passing through the barycenter $GM_w=\sum\limits_{i=1}^{n}w_iB_i=(w_1,\ldots,w_{n-1},0),$
and verifying that its intersection point with the hyperplane $\Pi$ is just the weighted arithmetic mean $M_w=\sum\limits_{i=1}^{n} w_i a_i.$
\end{proof}

\begin{remark} \label{remarkW}
In the non-weighted case, that is, when all $w_i=\frac{1}{n},i=1,\ldots,n,$ all the paraboloids degenerate in diagonal hyperplanes.
\end{remark}

A simpler representation using only hyperplanes is also possible for the general case of dealing with the weighted harmonic mean, as it comes out directly from Remark \ref{remarkW} and from the observation
\begin{equation} \label{wnw}
H_{\frac{1}{n}}(\frac{a_1}{w_1},\ldots,\frac{a_n}{w_n})=n H_w(a_1,\ldots,a_n),
\end{equation}
where $H_{\frac{1}{n}}$ stands for the harmonic mean with uniform weights $w_i=\frac{1}{n},i=1,\ldots,n.$

More precisely, using the previous notations and defining also
\begin{equation*}\label{eqnAH:PrismaHat}
	\begin{cases}
            a_i^*=\frac{a_i}{w_i},\\
			P_{i}^* \equiv (0, \cdots, 0,\underset{i}{1},0,\cdots,0, a_{i}^*), \quad i=1,\cdots,n-1,\\
			P_{n}^*= (0, \cdots, 0, a_{n}^*),\\
            H_w=H_w(a_1,\ldots,a_n), w=(w_1,\ldots,w_n),\\
            H_{\frac{1}{n}}^*=H_{w}(a_1^*,\ldots,a_n^*), w=(\frac{1}{n},\ldots,\frac{1}{n}),\\
            M_w=M_w(a_1,\ldots,a_n), w=(w_1,\ldots,w_n),\\
            M_{\frac{1}{n}}^*=M_{w}(a_1^*,\ldots,a_n^*), w=(\frac{1}{n},\ldots,\frac{1}{n}),
	\end{cases}
\end{equation*}
we can give the following corollary.

\begin{corollary} \label{Coro:NDcase}
Let us consider the hyperplanes $\Pi,$ $\Pi^*$ which pass through the points $P_i,$ and $P_i^*, i=1,\ldots,n,$ respectively, $n\geq 2.$ They are given by the equations
\begin{eqnarray}\label{eq:Plano_Pi_H1}
	\Pi &\equiv& \; x_n=a_{n} + \sum_{i=1}^{n}{x_{i}(a_{i}-a_{n})},\\
    \Pi^* &\equiv& \; x_n=\frac{a_{n}}{w_n} + \sum_{i=1}^{n}{x_{i}(\frac{a_{i}}{w_i}-\frac{a_{n}}{w_n})}.
\end{eqnarray}
Let us also consider the hyperplane $V_{n}^*$ which passes through the points $B_i, i=1,\ldots,n-1$ and $P_n^*$ given by the equation
\begin{equation} \label{eqPlaneHn-ND}
V_n^* \equiv  \; \sum_{i=1}^{n-1}{x_{i}} + w_n \dfrac{x_{n}}{a_{n}}=1.
\end{equation}
and the hyperplanes $V_i^*, i=1,\ldots,n-1$ given by the equations
\begin{eqnarray} \label{eqPlaneH-ND}
V_i^* &\equiv&	x_{n}=\frac{a_{i}}{w_i} x_{i}, \quad \textrm{which pass through} \ B_1,\ldots,B_{i-1},P_i^*,B_{i+1},\ldots, B_n.
\end{eqnarray}
Then, the system of equations formed by (\ref{eqPlaneHn-ND}) and (\ref{eqPlaneH-ND}) has a unique solution $(\bar{x}_{1},\ldots,\bar{x}_{n})$ given by
\begin{equation}\label{eqnHwTH:Punto_xH3D}
\bar{x}_{i}=w_{i} \dfrac{H_{w}}{a_{i}}, \quad i=1,\ldots,n-1, \quad	\bar{x}_{n}=\frac{H_{\frac{1}{n}}^*}{n}=H_w.
\end{equation}
Moreover, the following two affirmations are true:
\begin{itemize}
\item[a)] The height of the prism $P^*$ (with vertices $P_i^*$) through the point $(\bar{x}_1,\ldots,\bar{x}_{n-1},0)$ coincides with the harmonic mean of $a_i^*,i=1,\ldots,n,$ that is,
$H_{\frac{1}{n}}^*,$
and the height of the prism $P$ (with vertices $P_i$) through the same point is $\frac{1}{n}H_{\frac{1}{n}}^*=H_w,$ that is, the point
 $(\bar{x}_1,\ldots,\bar{x}_{n-1},H_w)$ belongs to the hyperplane $\Pi.$
\item[b)] The height of the prism $P^*$ through the barycenter of the
triangular base $GM_{\frac{1}{n}}=\sum\limits_{i=1}^{n}\frac{1}{n}B_i$ coincides with the arithmetic mean $M_{\frac{1}{n}}^*,$ and the height of the prism $P$ through the weighted barycenter of the
triangular base $GM_w=\sum\limits_{i=1}^{n}w_iB_i$ coincides with the weighted arithmetic mean $M_w.$
\end{itemize}
\end{corollary}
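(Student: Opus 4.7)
The plan is to obtain this corollary as a direct specialization of Theorem \ref{teo:NDcase1} to the rescaled data $a_i^{*}=a_i/w_i$ with uniform weights $1/n$, and then to translate the conclusion back to the weighted setting via the identity (\ref{wnw}), namely $H_{\frac{1}{n}}^{*}=n\,H_w$. Under this reduction, the coefficients $b_i$ appearing in Theorem \ref{teo:NDcase1} become $b_i=\frac{H_{\frac{1}{n}}^{*}}{\bar{x}_i(\bar{x}_i-1)}\bigl(\tfrac{1}{n}-\tfrac{1}{n}\bigr)=0$, so each paraboloid $V_i$ degenerates to the hyperplane $x_n=a_i^{*}x_i$, which is exactly $V_i^{*}$; likewise $V_n$ in Theorem \ref{teo:NDcase1}, applied to $a_n^{*}$, reads $\sum_{i=1}^{n-1}x_i+x_n/a_n^{*}=1$, which matches $V_n^{*}$ after substituting $a_n^{*}=a_n/w_n$. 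Thus the linear system in the corollary is precisely the degenerate case of the system in Theorem \ref{teo:NDcase1}.

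The first concrete step is the direct verification that the proposed point solves the system: substituting $\bar{x}_i=w_iH_w/a_i$ into $V_i^{*}$ gives $\bar{x}_n=(a_i/w_i)\bar{x}_i=H_w$, and substituting into $V_n^{*}$ yields
\begin{equation*}
\sum_{i=1}^{n-1}w_i\frac{H_w}{a_i}+w_n\frac{H_w}{a_n}=H_w\sum_{i=1}^{n}\frac{w_i}{a_i}=H_w\cdot\frac{1}{H_w}=1,
\end{equation*}
by Definition \ref{Mn}. The identification $\bar{x}_n=H_w=H_{\frac{1}{n}}^{*}/n$ is then read off from (\ref{wnw}).

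For uniqueness, since every equation in the system is affine (no quadratic terms survive the degeneration), we are looking at an $n\times n$ linear system; the argument of Theorem \ref{teo:NDcase1} specializes immediately, because in the homogeneous version the equations (\ref{eqnHwT1:T1_z_2}) reduce to $z_n=a_i^{*}z_i$ for $i=1,\dots,n-1$, which together with the single relation (\ref{eqnHwT1:T1_z_1}) force $z_n\bigl(\sum_{i=1}^{n-1}1/a_i^{*}+1/a_n^{*}\bigr)=0$, hence $z_n=0$ and then $z_i=0$. Alternatively, one may simply invoke Theorem \ref{teo:NDcase1} applied to $(a_1^{*},\dots,a_n^{*})$ with weights $1/n$ as a black box: the theorem gives a unique intersection point, and reading off its coordinates via $w_iH_{\frac{1}{n}}^{*}/(na_i^{*})=w_iH_w/a_i$ recovers the claimed $\bar{x}_i$.

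Finally, points (a) and (b) are resolved by the same argument as in Theorem \ref{teo:NDcase1}: the vertical line through $(\bar{x}_1,\ldots,\bar{x}_{n-1},0)$ meets $\Pi$ at height $H_w$ and meets $\Pi^{*}$ at height $H_{\frac{1}{n}}^{*}=nH_w$ by the scaling (\ref{wnw}); the vertical line through the barycenters $GM_w$ and $GM_{\frac{1}{n}}$ meets $\Pi$ and $\Pi^{*}$ at the respective arithmetic means, since the equation of $\Pi$ evaluated at $(w_1,\ldots,w_{n-1})$ gives $a_n+\sum_{i=1}^{n-1}w_i(a_i-a_n)=\sum_{i=1}^{n}w_ia_i=M_w$, and analogously for $\Pi^{*}$. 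There is essentially no obstacle: the only delicate point is the bookkeeping that correctly matches the rescaling $a_i\mapsto a_i/w_i$ to the prism $P^{*}$ and the normalization $H_{\frac{1}{n}}^{*}=nH_w$, so that both ``views'' of the same geometric configuration are consistent.
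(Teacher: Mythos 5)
Your proposal is correct and follows essentially the same route as the paper: the paper's own proof simply states that the corollary is derived from Theorem \ref{teo:NDcase1} (or Theorem \ref{teo:NDcase3}) by applying the scaling relation (\ref{wnw}) together with Remark \ref{remarkW}, which is precisely the specialization to $a_i^{*}=a_i/w_i$ with uniform weights that you carry out. Your version merely writes out the details (degeneration of the paraboloids via $b_i=0$, direct verification of the solution, and the linear uniqueness argument) that the paper leaves implicit.
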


\begin{proof}
It is trivially derived either from Theorem \ref{teo:NDcase1} or from Theorem \ref{teo:NDcase3} just by applying the relation (\ref{wnw}) and observing the Remark \ref{remarkW}.
\end{proof}

In Figure \ref{fig:PlanesH} we see the representation of Corollary \ref{Coro:NDcase} in $2D$ and in $3D$ for a particular choice of the arguments $a_i$ and the weights. In the upper part
 we see the case of two arguments. We can appreciate the relation (\ref{wnw}) between the harmonic mean of the modified arguments $a_i^*$ and the weighted harmonic mean of the original arguments $a_i,$ which is placed at the half part of the height of the trapezoid at the abscissa where both means take place. However, no clear relation is observed between the arithmetic mean of the modified values $M_{\frac{1}{n}}^*$ and the weighted arithmetic mean $M_w$ of the original ones. The same appreciation runs for the case of three arguments, where the weighted harmonic mean of the original arguments locates at the third part of the height of
 the prism through the corresponding abscissa.

\begin{figure}[!ht] \label{fig:PlanesH}
\centerline{\includegraphics[width=7cm]{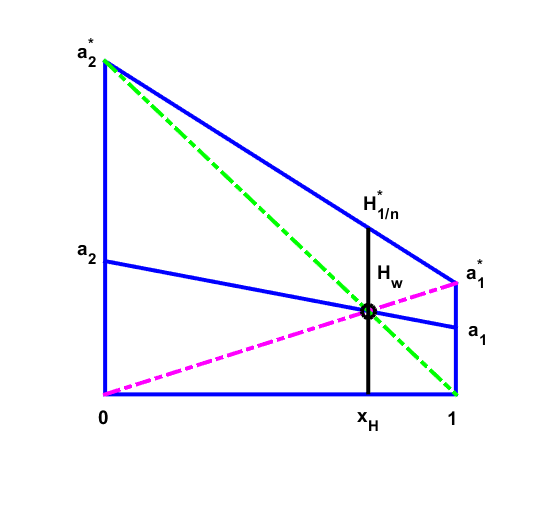}
\includegraphics[width=7cm]{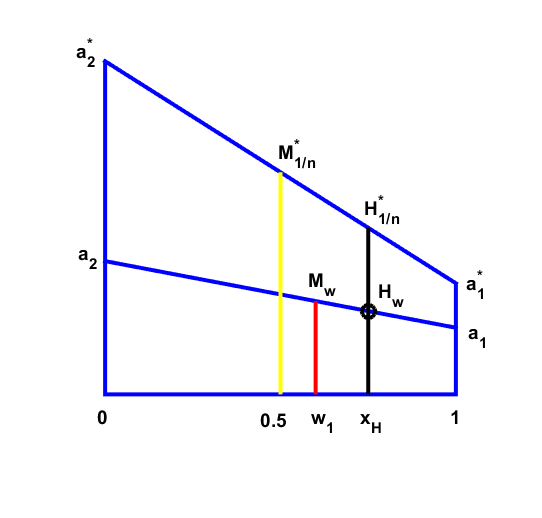}}
\centerline{\includegraphics[width=7cm]{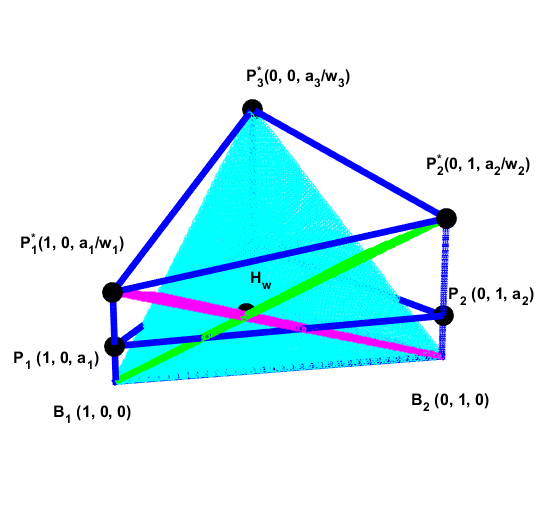}
\includegraphics[width=7cm]{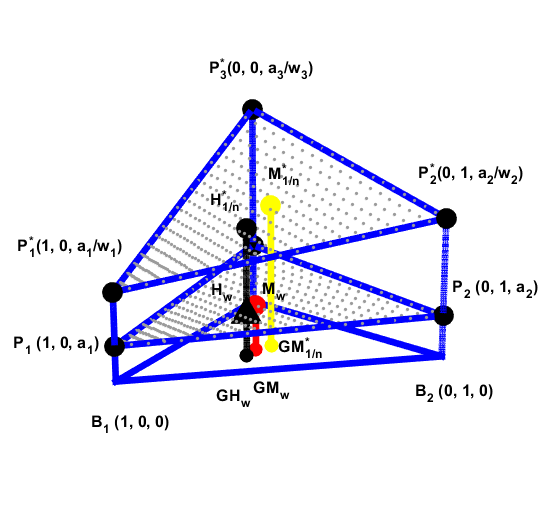}}
\caption{Representation of weighted harmonic mean according to Corollary \ref{Coro:NDcase}. Upper-left: Weighted harmonic mean of the two positive values $a_1=3,$ $a_2=6,$ with weights $w_1=0.6,$ $w_2=0.4.$
Upper-right: Comparison among $H_{\frac{1}{n}}^*,$ $H_w,$ $M_{\frac{1}{n}}^*,$ $M_w,$ in the case of two arguments.
Bottom-left: Weighted harmonic mean of the three positive values $a_1=6,$ $a_2=7,$ $a_3=10$ with weights $w_1=0.4,$ $w_2=0.3,$ $w_3=0.3.$
Bottom-right: Comparison among $H_{\frac{1}{n}}^*,$ $H_w,$ $M_{\frac{1}{n}}^*,$ $M_w,$ in the case of three arguments. In blue the harmonic mean, in red the weighted arithmetic mean of the original values,
in yellow the arithmetic mean of the modified values.}
\end{figure}

\section{Examples of application} \label{sec5}
In this section our main purpose is to point out how to use the simple theoretical results presented in previous sections to define a nonlinear reconstruction operator adapted to jump discontinuities. This application is just one possibility of use of the introduced concepts. It can be applied in many other contexts in order to define a nonlinear method from an already existing linear method, just by writing the necessary
expressions in terms of a weighted arithmetic mean of some quantities, which act somehow as smoothness indicators. Adaptation after the substitution of the weighted arithmetic mean for the corresponding 
weighted harmonic mean will take place if only a few of these quantities are affected by the presence of a potential discontinuity, and these affected quantities are not used 
in the rest of the expressions. These smoothness indicators should also satisfy similar hypothesis to those of Lemma \ref{ordenn} in smooth areas of an
hypothetical underlying function in order to maintain the approximation order of the original method.
The fact of substituting the arithmetic mean for a corresponding harmonic mean will allow the adaptation thanks to Lemma \ref{minimon}, since the large values,
due to the presence of a discontinuity, will be limited. \\
In real applications, it might be also necessary the application of a translation strategy as it is well explained in \cite{traslacion,Recons3DArxiv} in order to deal with weighted means of values which do not necessarily have the same sign.\\
Some examples of already existing methods that use these ideas with the harmonic mean of two values can be found in several applications. Let us mention:
\begin{itemize}
\item Point values reconstructions and the related field of subdivision and multiresolution schemes, see \cite{ADLT,OT3} and the references therein.
\item The field of image processing, to define nonlinear compression methods into the cell averages framework inside Harten's multiresolution, see \cite{ALRT}.
\item Also in the field of image processing for denoising purposes, see \cite{Den1}.
\item Generation of curves and surfaces, due to some remarkable properties of the harmonic mean in relation with the definition of convexity preserving reconstruction methods, see for example \cite{KD}.
\item In combination with spline reconstructions, see \cite{ACRT}.
\item In the solution of hyperbolic conservation laws, see \cite{Susa,Mar}.
\end{itemize}

Up to our knowledge, there is only one application using these ideas in $3D$ involving harmonic means of $3$ values, \cite{Recons3DArxiv}, and there is no other implementation of nonlinear algorithms based on 
this methodology in higher dimensions or involving harmonic means of more than $3$ values.

\section{Conclusions}\label{sec6}

In this article, we have presented two relevant properties of the harmonic mean that allow for new constructions of numerical methods, such as nonlinear reconstruction operators, subdivision and multiresolution
schemes, and solvers of hyperbolic conservation laws. These properties have been presented for any finite number of arguments, with the purpose of generating new algorithms in problems involving $N$-dimensional
spaces. We have given some geometrical representations of both the weighted harmonic mean and the weighted arithmetic mean, where the mentioned properties can be appreciated in an intuitive way. In the last part of the article, we offer a list of examples that illustrate the methodology for the two variables case, and we explain how to use these simple concepts to attain interesting and promising results in defining new methods for 
higher dimensions. In fact, we give also a reference of a particular new  reconstruction method for two dimensional functions which seems to avoid Gibbs effect at the time of retaining some approximation order close
to jump discontinuities.

\clearpage

\end{document}